\documentclass[a4paper, 10pt, DIV10, headinclude=false, footinclude=false]{scrarticle}
\usepackage[english]{babel}
\usepackage{lipsum}
\usepackage{amsfonts}
\usepackage{amssymb}
\usepackage{mathtools}
\usepackage{amsthm}
\usepackage{bm}
\usepackage{bbm}
\usepackage{graphicx}
\usepackage{xspace}
\usepackage{epstopdf}
\usepackage{algorithmic}
\usepackage{csquotes}
\usepackage{stmaryrd}
\usepackage{trimclip}
\usepackage{scalerel}
\usepackage{hyperref}
\usepackage{cleveref}
\usepackage[shortlabels]{enumitem}
\usepackage{todonotes}

\usepackage{amsopn}

\usepackage{tikz}
\usepackage{tikz-3dplot}
\usepackage{tikzscale}
\usepackage{tikzsymbols}
\usetikzlibrary{calc,positioning,shapes.multipart,patterns,shapes.arrows}
\usetikzlibrary{decorations.markings,arrows.meta}
\usetikzlibrary{arrows}
\usetikzlibrary{patterns.meta}
\usetikzlibrary{fadings}
\usetikzlibrary{matrix}
\usetikzlibrary{plotmarks}
\usetikzlibrary{external}
\providecommand{\tikzexternaloutputdirectory}{out}
\tikzset{
  external/system call={
    pdflatex \tikzexternalcheckshellescape
      -halt-on-error -interaction=batchmode
      -output-directory="\tikzexternaloutputdirectory"
      -jobname "\image" "\texsource"
  }
}
\usepackage{caption}
\usepackage{subcaption}
\usepackage{pgfplots}
\usepackage{pgfplotstable}
\pgfplotsset{compat=newest}
\pgfplotsset{plot coordinates/math parser=false}
\pgfplotsset{
    table/search path={tikz/data}
}
\usepgfplotslibrary{groupplots,dateplot}

\tikzfading[name=fade left,
  left color=transparent!0,right color=transparent!75]
\tikzfading[name=fade right,
  left color=transparent!75,right color=transparent!0]

\tikzset
{midarrow/.style={decoration={markings,mark=at position 0.5 with
			{\arrow[xshift=2pt]{Latex[length=4pt,#1]}}},postaction={decorate}}
}

\tikzset{>=stealth}

\newcommand{\cf}{cf.\xspace}
\newcommand{\ie}{i.e.\xspace}

\NewDocumentCommand{\dG}{s}{dG\IfBooleanF{#1}{\xspace}}

\NewDocumentCommand{\wellposed}{s}{well-posed\IfBooleanF{#1}{\xspace}}

\NewDocumentCommand{\Friedrichs}{s}{Friedrichs'\IfBooleanF{#1}{\xspace}}

\NewDocumentCommand{\Friedrichstitle}{s}{FRIEDRICHS'\IfBooleanF{#1}{\xspace}}

\NewDocumentCommand{\Maxwells}{s}{Maxwell's\IfBooleanF{#1}{\xspace}}

\NewDocumentCommand{\discontinuous}{s}{discontinuous\IfBooleanF{#1}{\xspace}}

\NewDocumentCommand{\Galerkin}{s}{Galerkin\IfBooleanF{#1}{\xspace}}

\NewDocumentCommand{\Lts}{s}{Local time-stepping\IfBooleanF{#1}{\xspace}}

\NewDocumentCommand{\Ltstitle}{s}{LOCAL TIME-INTEGRATION\IfBooleanF{#1}{\xspace}}

\NewDocumentCommand{\lts}{s}{local time-stepping\IfBooleanF{#1}{\xspace}}

\NewDocumentCommand{\lti}{s}{local time-integration\IfBooleanF{#1}{\xspace}}

\NewDocumentCommand{\Lti}{s}{Local time-integration\IfBooleanF{#1}{\xspace}}

\NewDocumentCommand{\righthandside}{s}{right-hand-side\IfBooleanF{#1}{\xspace}}

\NewDocumentCommand{\leapfrog}{s}{leapfrog\IfBooleanF{#1}{\xspace}}

\NewDocumentCommand{\Leapfrog}{s}{Leapfrog\IfBooleanF{#1}{\xspace}}

\NewDocumentCommand{\CN}{s}{Crank-Nicolson\IfBooleanF{#1}{\xspace}}

\NewDocumentCommand{\LFC}{s}{Leapfrog-Chebychev\IfBooleanF{#1}{\xspace}}

\NewDocumentCommand{\lfc}{s}{leapfrog-Chebychev\IfBooleanF{#1}{\xspace}}

\NewDocumentCommand{\CFL}{s}{CFL\IfBooleanF{#1}{\xspace}}

\NewDocumentCommand{\selfadj}{s}{self-adjoint\IfBooleanF{#1}{\xspace}}

\NewDocumentCommand{\pd}{s}{positive definite\IfBooleanF{#1}{\xspace}}

\NewDocumentCommand{\psd}{s}{positive semi-definite\IfBooleanF{#1}{\xspace}}

\NewDocumentCommand{\wrt}{s}{w.r.t.\IfBooleanF{#1}{\xspace}}

\NewDocumentCommand{\timestepsize}{s}{time stepsize\IfBooleanF{#1}{\xspace}}

\NewDocumentCommand{\timestepsizes}{s}{time stepsizes\IfBooleanF{#1}{\xspace}}

\newcommand{\Gauss}{Gau\ss\xspace}
\newcommand{\maxwell}{Maxwell's equations\xspace}

\newcommand{\Krylov}{Krylov subspace method\xspace}
\newcommand{\Krylovs}{Krylov subspace methods\xspace}
\newcommand{\Krylovsos}{Krylov subspace methods}
\newcommand{\rk}{Runge-Kutta method\xspace}
\newcommand{\rks}{Runge-Kutta methods\xspace}

\newcommand{\gcs}{Gau{\ss} collocation methods\xspace}

\newcommand{\matsqrt}[1]{{#1}^{1/2}}

\newcommand{\invsqrt}[1]{{#1}^{-1/2}}

\newcommand{\inv}[1]{{#1}^{-1}}

\newcommand{\real}{\mathbb{R}}

\newcommand{\realvec}[1]{{\real}^{#1}}

\newcommand{\realmat}[2]{{\real}^{#1 \times #2}}

\newcommand{\complex}{\mathbb{C}}

\newcommand{\complexvec}[1]{{\complex}^{#1}}

\newcommand{\complexmat}[2]{{\complex}^{#1 \times #2}}

\NewDocumentCommand{\derv}{O{} m}{\partial_{#2}^{#1}}

\newcommand{\bbN}{\mathbb{N}}

\newcommand{\da}{\coloneqq}

\NewDocumentCommand{\re}{o}{
  \IfValueTF{#1}{
    \operatorname{Re}(#1)
  }{
    \operatorname{Re}
  }
}

\NewDocumentCommand{\im}{o}{
  \IfValueTF{#1}{
    \operatorname{Im}(#1)
  }{
    \operatorname{Im}
  }
}

\newcommand{\tn}[1]{t_{#1}}
\newcommand{\ts}{\tau}

\renewcommand{\dim}{d}

\newcommand{\abs}[1]{\left\vert #1 \right\vert}

\NewDocumentCommand{\ip}{m o}{
  \IfValueTF{#2}{
    \bigl( #1 \bigr)_{#2}
  }{
    \bigl( #1 \bigr)
  }
}

\makeatletter
\def\mathcolor#1#{\@mathcolor{#1}}
\def\@mathcolor#1#2#3{%
  \protect\leavevmode
  \begingroup
    \color#1{#2}#3%
  \endgroupx
}
\makeatother

\definecolor{KITlilac}{RGB}{160,0,120}

\newcommand{\exE}{E}
\newcommand{\exH}{H}
\newcommand{\exJ}{J}

\newcommand{\dGdegree}{k}

\NewDocumentCommand{\dofs}{o}{
  \IfValueTF{#1}
    {N_{\sol[#1]}}
    {N}
}

\NewDocumentCommand{\projLtwo}{s}{
  \IfBooleanTF{#1}{\Pi_{\meshdiam}}{\pi_{\meshdiam}}
}

\NewDocumentCommand{\polydegmost}{O{}}{\mathbb{Q}_{\dim}^{#1}}

\NewDocumentCommand{\polytotaldegmost}{O{}}{\mathbb{P}_{\dim}^{#1}}

\NewDocumentCommand{\normal}{O{} m}{\mathbf{n}^{#2}_{#1}}

\NewDocumentCommand{\problemdim}{o}{m_{\IfValueT{#1}{\sol[#1]}}}

\newcommand{\domain}{\Omega}
\newcommand{\boundary}{\Gamma}

\NewDocumentCommand{\celldiam}{O{}}{h_{\cell}^{#1}}

\NewDocumentCommand{\facediam}{O{}}{h_{\face}^{#1}}

\NewDocumentCommand{\meshdiam}{O{}}{h^{#1}}

\NewDocumentCommand{\maxmeshdiam}{s O{}}{
  \IfBooleanTF{#1}{
    \max_{\cell \in \mesh} \celldiam
  }{
    h_{\text{max}}^{#2}
  }
}

\NewDocumentCommand{\minmeshdiam}{s O{}}{
  \IfBooleanTF{#1}{
    \min_{\cell \in \mesh} \celldiam
  }{
    h_{\text{min}}^{#2}
  }
}

\NewDocumentCommand{\meshdiamCoarse}{O{}}{
  \meshdiam[#1]_c
}

\NewDocumentCommand{\meshdiamFine}{O{}}{
  \meshdiam[#1]_f
}

\NewDocumentCommand{\maxmeshdiamCoarse}{s O{}}{
  \IfBooleanTF{#1}{
    \max_{\cell \in \meshCoarse} \celldiam
  }{
    \meshdiam[#2]_{c,\text{max}}
  }
}

\NewDocumentCommand{\minmeshdiamCoarse}{s O{}}{
  \IfBooleanTF{#1}{
    \min_{\cell \in \meshCoarse} \celldiam
  }{
    \meshdiam[#2]_{c,\text{min}}
  }
}

\NewDocumentCommand{\maxmeshdiamFine}{s O{}}{
  \IfBooleanTF{#1}{
    \max_{\cell \in \meshFine} \celldiam
  }{
    \meshdiam[#2]_{f,\text{max}}
  }
}

\NewDocumentCommand{\minmeshdiamFine}{s O{}}{
  \IfBooleanTF{#1}{
    \min_{\cell \in \meshFine} \celldiam
  }{
    \meshdiam[#2]_{f,\text{min}}
  }
}

\newcommand{\mesh}{\mathcal{T}_{\meshdiam}}
\newcommand{\cell}{K}

\newcommand{\face}{F}

\newcommand{\cutoffModified}{\chi_m}

\newcommand{\cutoffLeapfrog}{\chi_{\text{LF}}}

\newcommand{\meshCoarse}{\mathcal{T}_{\meshdiam,c}}
\newcommand{\meshFine}{\mathcal{T}_{\meshdiam,f}}

\NewDocumentCommand{\dgspace}{o}{\mathbb{V}_{\meshdiam}\IfValueT{#1}{^{\sol[#1]}}}

\NewDocumentCommand{\brokenpoly}{s o m}{\polydegmost[#3](\mesh)\IfBooleanF{#1}{^{\problemdim[#2]}}}

\NewDocumentCommand{\sobolev}{m m}{H^{#1}(#2)}

\NewDocumentCommand{\brokensobolev}{s o m}{\sobolev{#3}{\mesh}\IfBooleanF{#1}{^{\problemdim[#2]}}}

\NewDocumentCommand{\Ltwo}{o}{L^2\IfValueT{#1}{(#1)}}

\NewDocumentCommand{\Ltwovec}{o m}{L^2(#2)^{\problemdim[#1]}}

\NewDocumentCommand{\Linfty}{o}{L^{\infty}\IfValueT{#1}{(#1)}}

\NewDocumentCommand{\Linftymatrix}{o m}{L^{\infty}(#2)^{\problemdim[#1] \times \problemdim[#1]}}

\NewDocumentCommand{\sobolevinftymatrix}{o m m}{W^{#2, \infty}(#3)^{\problemdim[#1] \times \problemdim[#1]}}

\NewDocumentCommand{\brokensobolevinftymatrix}{o m}{\sobolevinftymatrix[#1]{#2}{\mesh}}

\NewDocumentCommand{\norm}{m O{}}{\Vert #1 \Vert_{#2}}

\NewDocumentCommand{\seminorm}{m O{}}{\vert #1 \vert_{#2}}

\NewDocumentCommand{\dgip}{o m}{\ip{#2}_{\domain}}%

\NewDocumentCommand{\dgnorm}{o m}{\norm{#2}[\domain]}%

\NewDocumentCommand{\opnorm}{s o O{} m}{
  \IfBooleanTF{#1}{
    \norm{#4}[\dgspace[#3]\leftarrow\dgspace[#2]]
  }{
    \norm{#4}[\dgspace[#2]\leftarrow\dgspace[#2]]
  }
}

\NewDocumentCommand{\grhs}{o}{
  \IfNoValueTF{#1}{
    g
  }{
    g_{\sol[#1]}
  }
}

\NewDocumentCommand{\rhs}{s o m}{
  \IfBooleanTF{#1}{
    \IfValueTF{#2}{
      g_{\sol[#2]}
    }{
      g
    }
  }{
    \IfNoValueTF{#2}{
      g_{\meshdiam}^{#3}
    }{
      g_{\sol[#2], \meshdiam}^{#3}
    }
  }
}

\NewDocumentCommand{\avggrhshn}{o m}{
  \IfNoValueTF{#1}{
    \overline{g}_{\meshdiam}^{#2}
  }{
    \overline{g}_{\sol[#1], \meshdiam}^{#2}
  }
}

\NewDocumentCommand{\sol}{o}{
  \IfNoValueTF{#1}{x}{
    \IfEqCase{#1}{
      {first}{u}
      {second}{v}
      {dummy}{w}
    }
  }
}

\NewDocumentCommand{\soltest}{o}{
  \IfNoValueTF{#1}{y}{
    \IfEqCase{#1}{
      {first}{\psi}
      {second}{\phi}
    }
  }
}

\NewDocumentCommand{\exsol}{o m}{\widehat{\sol[#1]}^{#2}}

\NewDocumentCommand{\solh}{o O{}}{%
  \IfValueTF{#1}{%
      \IfEqCase{#1}{%
          {first}{\sol[#1]_{\meshdiam}^{#2}}%
          {second}{\sol[#1]_{\meshdiam}^{#2}}%
          {dummy}{\sol[#1]_{\meshdiam}^{#2}}%
      }[\sol_{\meshdiam}^{#2}]%
  }{%
      \sol_{\meshdiam}%
  }%
}

\NewDocumentCommand{\avgsolh}{o m}{\overline{\sol[#1]}_{\meshdiam}^{#2}}

\NewDocumentCommand{\soltesth}{o}{\soltest[#1]_{\meshdiam}}

\NewDocumentCommand{\material}{o}{
  \IfValueTF{#1}
    {\mathcal{M}_{\sol[#1]}}
    {\mathcal{M}}
}

\NewDocumentCommand{\friedop}{o}{
  \IfValueTF{#1}
    {\mathcal{L}_{\sol[#1]}}
    {\mathcal{L}}
}

\NewDocumentCommand{\friedopadj}{o}{
  \IfValueTF{#1}
    {\mathcal{L}_{\sol[#1]}^{\ast}}
    {\mathcal{L}^{\ast}}
}

\NewDocumentCommand{\friedcoeff}{o m}{
  \IfValueTF{#1}
    {L_{#2, \sol[#1]}}
    {L_{#2}}
}

\NewDocumentCommand{\friedoph}{o}{
  \IfValueTF{#1}
    {\mathcal{L}_{\sol[#1],\meshdiam}}
    {\mathcal{L}_{\meshdiam}}
}

\NewDocumentCommand{\friedophleapfrog}{o}{
  \friedoph[#1]^{\text{LF}}
}

\NewDocumentCommand{\friedophexplicit}{o}{
  \friedoph[#1]^{\text{c}}
}

\NewDocumentCommand{\friedophmodified}{o}{
  \friedoph[#1]^{\text{f}}
}

\NewDocumentCommand{\friedbndop}{o}{
  \IfValueTF{#1}
    {\mathcal{L}_{\boundary, \sol[#1]}}
    {\mathcal{L}_{\boundary}}
}

\NewDocumentCommand{\friedbndfield}{o}{
  \IfValueTF{#1}
    {L_{\boundary, \sol[#1]}}
    {L_{\boundary}}
}

\NewDocumentCommand{\friedassbndop}{o}{
  \IfValueTF{#1}
    {\mathcal{L}_{\partial, \sol[#1]}}
    {\mathcal{L}_{\partial}}
}

\NewDocumentCommand{\friedassbndfield}{o m}{
  \IfValueTF{#1}
    {L_{\partial, \sol[#1]}^{#2}}
    {L_{\partial}^{#2}}
}

\NewDocumentCommand{\Id}{o}{
    \mathcal{I}_{\IfNoValueTF{#1}{}{\sol[#1]}}
}

\NewDocumentCommand{\Ih}{o}{
	\mathbf{I}_{\IfNoValueTF{#1}{}{\sol[#1]}}
}

\newcommand{\filfun}{\Psi}

\newcommand{\filfunhat}{\widehat{\filfun}}

\newcommand{\auxfilfun}{\Theta}

\newcommand{\auxfilfuncardi}{\varphi}

\NewDocumentCommand{\friedopsecondorder}{s o}{
  \IfBooleanTF{#1}{
    \mathcal{A}_{\IfValueT{#2}{\text{#2}}}
  }{
    \friedoph[second] 
    \IfValueT{#2}{
      \IfEqCase{#2}{
        {lf}{\cutoffLeapfrog}
        {f}{\cutoffModified}
      }
    } \friedoph[first]
  }
}

\NewDocumentCommand{\filfunophat}{s o}{
  \IfBooleanTF{#1}
    {\filfunhat(-\tau^2\friedopsecondorder[#2])}
    {\mathbf{\filfunhat}}
}

\NewDocumentCommand{\filfunop}{s o}{
  \IfBooleanTF{#1}
    {\filfun(-\tau^2\friedopsecondorder[#2])}
    {\mathbf{\filfun}}
}

\NewDocumentCommand{\auxfilfunop}{s o}{
  \IfBooleanTF{#1}
    {\auxfilfun(-\tau^2\friedopsecondorder[#2])}
    {\mathbf{\auxfilfun}}
}

\NewDocumentCommand{\perturbationop}{o}{
  \IfValueTF{#1}
    {\mathcal{D}_{\text{lf},\sol[#1]}}
    {\mathcal{D}_{\text{lf}}}
}

\NewDocumentCommand{\auxfilfuncardiop}{s o}{
  \IfBooleanTF{#1}
    {\auxfilfuncardi(-\tau^2\friedopsecondorder[#2])}
    {\boldsymbol{\auxfilfuncardi}}
}

\NewDocumentCommand{\genop}{s o o}{%
  \IfBooleanTF{#1}{%
    \widehat{\mathbf{R}}%
  }{%
    \mathbf{R}%
  }
  \IfValueT{#3}{%
    \IfEqCase{#3}{%
      {inv}{^{-1}}%
      {LF}{^\text{LF}}%
      {CN}{^\text{CN}}%
      {LTI}{^\text{LTI}}%
    }
  }%
  \IfValueT{#2}{%
    \IfEqCase{#2}{%
      {p}{_+}%
      {m}{_-}%
      {pm}{_\pm}%
      {inv}{^{-1}}%
      {LF}{^\text{LF}}%
      {CN}{^\text{CN}}%
      {LTI}{^\text{LTI}}%
    }%
  }%
}

\NewDocumentCommand{\idmatrix}{o}{
  \IfValueTF{#1}{
    \mathbf{I}_{#1}
  }{
    \mathbf{I}
  }
}

\NewDocumentCommand{\zeromatrix}{o}{
  \IfValueTF{#1}{
    \mathbf{0}_{#1}
  }{
    \mathbf{0}
  }
}

\NewDocumentCommand{\onevector}{o}{
  \IfValueTF{#1}{
    \mathbbm{1}_{#1}
  }{
    \mathbbm{1}
  }
}

\NewDocumentCommand{\massmatrix}{s o}{
  \IfBooleanTF{#1}{
    \IfValueTF{#2}{
      \mathbf{\widetilde M}_{\sol[#2]}
    }{
      \mathbf{\widetilde M}
    }
  }{
    \IfValueTF{#2}{
      \mathbf{M}_{\sol[#2]}
    }{
      \mathbf{M}
    }
  }
}

\NewDocumentCommand{\friedmatrix}{s o}{
  \IfBooleanTF{#1}{
    \IfValueTF{#2}{
      \mathbf{\widetilde L}_{\sol[#2]}
    }{
      \mathbf{\widetilde L}
    }
  }{
    \IfValueTF{#2}{
      \mathbf{L}_{\sol[#2]}
    }{
      \mathbf{L}
    }
  }
}

\NewDocumentCommand{\friedmatrixmodified}{o}{
  \friedmatrix[#1]^{f}
}

\NewDocumentCommand{\friedmatrixnonstiff}{o}{
  \friedmatrix[#1]^{c}
}

\NewDocumentCommand{\solvector}{s o m}{
  \IfBooleanTF{#1}{
    \mathbf{\sol[#2]}(#3)
  }{
    \mathbf{\sol[#2]}^{#3}
  }
}

\NewDocumentCommand{\soltestvector}{s o m}{
  \IfBooleanTF{#1}{
    \mathbf{\soltest[#2]}(#3)
  }{
    \mathbf{\soltest[#2]}^{#3}
  }
}

\NewDocumentCommand{\rhsvector}{s o m}{
  \IfValueTF{#2}{%
    \IfEqCase{#2}{%
      {first}{\mathbf{g}_{\sol[#2]}}%
      {second}{\mathbf{g}_{\sol[#2]}}%
      {dummy}{\mathbf{g}_{\sol[#2]}}%
    }[\mathbf{g}]%
    \IfBooleanTF{#1}{(#3)}{^{#3}}%
  }{%
    \mathbf{g}\IfBooleanTF{#1}{(#3)}{^{#3}}%
  }%
}

\NewDocumentCommand{\fullrhsvector}{s o}{
  \IfValueTF{#2}{
  \IfBooleanTF{#1}{
    \mathbf{f}(#2)
  }{
    \mathbf{f}^{#2}
  }}{
    \mathbf{f}
  }
}

\NewDocumentCommand{\scdordermatrixmodified}{s}{%
  \IfBooleanTF{#1}{%
    \friedmatrixmodified[second]\friedmatrixmodified[first]%
  }{%
    \mathbf{A}_{f}%
  }%
}

\NewDocumentCommand{\scdordermatrixnonstiff}{s}{%
  \IfBooleanTF{#1}{%
    \friedmatrixnonstiff[second]\friedmatrixnonstiff[first]%
  }{%
    \mathbf{A}_{c}%
  }%
}

\NewDocumentCommand{\scdordermatrixmodifiedmodified}{s}{%
  \IfBooleanTF{#1}{%
    \massmatrix[second]^{-1}\friedmatrixmodified[first]\massmatrix[first]^{-1}\friedmatrixmodified[second]%
  }{%
    \widehat{\mathbf{A}}_{f}%
  }%
}

\NewDocumentCommand{\rkstage}{s o m O{}}{%
  \IfBooleanTF{#1}{%
    \IfValueTF{#2}{%
      #4{\mathbf{Z}}_{\sol[#2], #3}%
    }{%
      #4{\mathbf{Z}}_{#3}%
    }%
  }{%
    \IfValueTF{#2}{%
      #4{\mathbf{X}}_{\sol[#2], #3}%
    }{%
      #4{\mathbf{X}}_{#3}%
    }
  }%
}

\NewDocumentCommand{\rkrhs}{s o m O{}}{%
  \IfBooleanTF{#1}{%
    \IfValueTF{#2}{%
      #4{\widetilde{\mathbf{G}}}_{\sol[#2], #3}%
    }{%
      #4{\widetilde{\mathbf{G}}}_{#3}%
    }%
  }{%
    \IfValueTF{#2}{%
      #4{\mathbf{G}}_{\sol[#2], #3}%
    }{%
      #4{\mathbf{G}}_{#3}%
    }%
  }
}

\NewDocumentCommand{\preconditioner}{s}{%
  \IfBooleanTF{#1}{%
    \widetilde{\mathbf{B}}%
  }{%
    \mathbf{B}%
  }%
}

\NewDocumentCommand{\systemmatrix}{s}{%
  \IfBooleanTF{#1}{%
    \widetilde{\mathbf{A}}%
  }{%
    \mathbf{A}%
  }%
}

\NewDocumentCommand{\systemsol}{s}{%
  \IfBooleanTF{#1}{%
    \widetilde{\mathbf{x}}%
  }{%
    \mathbf{x}%
  }%
}

\NewDocumentCommand{\systemrhs}{s}{%
  \IfBooleanTF{#1}{%
    \widetilde{\mathbf{b}}%
  }{%
    \mathbf{b}%
  }%
}

\NewDocumentCommand{\dummyvec}{s}{%
  \IfBooleanTF{#1}{%
    \widetilde{\mathbf{y}}%
  }{%
    \mathbf{y}%
  }%
}

\NewDocumentCommand{\fov}{s m}{%
  \IfBooleanTF{#1}{%
    \mathbfcal{F}(#2)%
  }{%
    \mathcal{F}(#2)%
  }%
}

\newcommand{\rkmatrix}{\mathbfcal{O}\!\!\iota}

\newcommand{\rkmatrixeig}[1]{\lambda_{#1}}

\newcommand{\rkstages}{s}

\NewDocumentCommand{\qmrsol}{s o}{
  \IfBooleanTF{#1}{
    \widetilde{\mathbf{x}}_{#2}
  }{
    \mathbf{x}_{#2}
  }
}

\NewDocumentCommand{\defectTrapez}{o m}{\delta_{\text{tr} \IfValueT{#1}{,\sol[#1]}}^{#2}}

\NewDocumentCommand{\err}{m o}{e_{\IfValueT{#2}{\sol[#2]}}^{#1}}

\NewDocumentCommand{\discerr}{m o}{e_{{\meshdiam} \IfValueT{#2}{, \sol[#2]}}^{#1}}

\NewDocumentCommand{\projerr}{o m}{e_{\pi\IfValueT{#1}{, \sol[#1]}}^{#2}}

\NewDocumentCommand{\projerrwithop}{o m O{}}{e_{\pi, \friedop[#1]#2}^{#3}}

\NewDocumentCommand{\defect}{o m}{\widehat{\delta}_{\meshdiam \IfValueT{#1}{,\sol[#1]}}^{#2}}

\NewDocumentCommand{\defectsmooth}{o m}{\widehat{\delta}_{\meshdiam \IfValueT{#1}{,\sol[#1]}, s}^{#2}}

\NewDocumentCommand{\defectfried}{o m}{\widehat{\delta}_{\meshdiam \IfValueT{#1}{,\sol[#1]}, \friedoph}^{#2}}

\NewDocumentCommand{\ipDiff}{o m}{\Delta_{\text{m}\IfValueT{#1}{, \sol[#1]}}(#2)}

\NewDocumentCommand{\bilDiff}{o m m}{\Delta_{{#2}\IfValueT{#1}{, \sol[#1]}}(#3)}

\NewDocumentCommand{\frieddom}{o}{
  \mathcal{D}(\friedop[#1])
}

\NewDocumentCommand{\graphspace}{o}{H(\friedop[#1])}

\NewDocumentCommand{\dgfrieddom}{o}{V_{\meshdiam}^{\friedop[#1]}}

\NewDocumentCommand{\CFriedop}{o}{C_{\friedop[#1]}}

\NewDocumentCommand{\CFriedInvIneq}{o}{C_{\text{inv}, \friedop \IfValueT{#1}{,\sol[#1]}}}

\NewDocumentCommand{\CFriedInvIneqCoarse}{o}{C_{\text{inv}, \friedop, \IfValueT{#1}{\sol[#1],} c}}

\NewDocumentCommand{\Capprox}{o}{C_{\pi, \friedop \IfValueT{#1}{,\sol[#1]}}}

\newcommand{\polynomials}[1]{\mathcal{P}_{#1}}

\newcommand{\minpm}{\min_{\substack{p_m \in \polynomials{m}\\p_m(0)=1}}}

\newcommand{\qmrsuperset}{\mathcal{S}}

\NewDocumentCommand{\maxfov}{s}{
  \IfBooleanTF{#1}{
    \max_{z \in \fov{\systemmatrix*}}
  }{
    \max_{z \in \fov{\systemmatrix}}
  }
}

\newcommand{\realpha}{\alpha_{\scriptscriptstyle R}}
\newcommand{\imalpha}{\alpha_{\scriptscriptstyle I}}

\ifpdf
  \DeclareGraphicsExtensions{.eps,.pdf,.png,.jpg}
\else
  \DeclareGraphicsExtensions{.eps}
\fi

\DeclareMathOperator{\diag}{diag}

\DeclareMathAlphabet\mathbfcal{OMS}{cmsy}{b}{n}

\makeatletter
\DeclareRobustCommand{\revshortto}{%
	\mathrel{\mathpalette\short@to\relax}%
}

\newcommand{\short@to}[2]{%
	\clipbox{0 0 {.5\width} 0}{$\m@th#1\vphantom{+}{\shortleftarrow }$\,}%
}
\makeatother
\newcommand{\generalMatrix}{\mathbf{K}}
\newcommand{\invGeneral}{\generalMatrix^{-1}}
\newcommand{\pmGeneral}{p_m(\generalMatrix)}
\newcommand{\pmGeneralrz}{p_m(\generalMatrix)\brz}

\newcommand{\irm}{\operatorname{i}}

\newcommand{\Bmhat}{\mathbf{\widehat{B}}}
\newcommand{\Dmhat}{\mathbf{\widehat{D}}}
\newcommand{\Cmhat}{\mathbf{\widehat{C}}}
\newcommand{\Pmhat}{\mathbf{\widehat{P}}}

\newcommand{\tausq}{\tau^2}  %

\newcommand{\At}{\widetilde{\mathbf{A}}}

\newcommand{\hf}{h_{f}}

\newcommand{\be}{\mathbf{e}}
\newcommand{\bu}{\mathbf{u}}
\newcommand{\bff}{\mathbf{f}}
\newcommand{\bx}{\mathbf{x}}
\newcommand{\by}{\mathbf{y}}

\newcommand{\bv}{\mathbf{v}}
\newcommand{\bw}{\mathbf{w}}
\newcommand{\zero}{\mathbf{0}}

\newcommand{\brm}{\mathbf{r}_m}
\newcommand{\bym}{\mathbf{y}_m}
\newcommand{\bxm}{\mathbf{x}_m}

\newcommand{\brz}{\mathbf{r}_0}
\newcommand{\bxz}{\mathbf{x}_0}

\newcommand{\but}{\widetilde{\mathbf{u}}}

\newcommand{\calPm}{\mathbb{P}_m}
\newcommand{\Kry}{\mathcal{K}_m(\generalMatrix,\brz)}

\newcommand{\Pm}{\mathbf{P}_m}

\newcommand{\Hmt}{\widetilde{\mathbf{H}}_{m}}
\newcommand{\Hmpt}{\widetilde{\mathbf{H}}^+_{m}}
\newcommand{\HmtH}{\widetilde{\mathbf{H}}^*_{m}}
\newcommand{\Dmo}{\mathbf{D}_{m+1}}

\newcommand{\rhoGeneral}{\rho_\generalMatrix}

\newcommand{\rhoepsAt}{\rho_{\At}}

\newcommand{\Wm}{\mathbf{W}_m}
\newcommand{\Wmo}{\mathbf{W}_{m+1}}

\newcommand{\ipi}[1]{\langle #1 \rangle}

\newcommand{\lark}{\mathbf{\Lambda}_{\mathcal{O}\!\!\iota}}
\newcommand{\Tinv}{\mathbf{T}^{-1}}
\newcommand{\Tbar}{\overline{\mathbf{T}}}

\newcommand{\calS}{\mathcal{S}}

\newcommand{\sstage}{$s$-stage\xspace}

\newcommand{\where}{\quad \text{where} \quad}
\newcommand{\andeq}{\quad \text{and} \quad}

\newcommand{\card}[1]{\text{card}\left( #1 \right)}

\newcommand{\Grtg}{{\Gamma}_{\gamma}^e}
\newcommand{\Git}{{\Gamma}^i_{\gamma}}

\newcommand{\Gtez}{{\Gamma}_{\zeta}^e}
\newcommand{\Gtiz}{{\Gamma}_{\zeta}^i}

\newcommand{\Gtea}{{\Gamma}_{\alpha}^e}

\newcommand{\PQMR}{pQMR\xspace}
\newcommand{\Qt}{{Q}}
\newcommand{\Rt}{{R}}

\newcommand{\cAt}{c_{\At}}

\numberwithin{equation}{section}

\newenvironment{abstr}[1]{ \vspace{.05in}\footnotesize
	\parindent .2in
	{\upshape\bfseries #1. }\ignorespaces}{\par\vspace{.1in}}
	
	\newenvironment{keywords}{\begin{abstr}{Key words}}{\end{abstr}}
	\newenvironment{AMS}{\begin{abstr}{AMS subject classifications}}{\end{abstr}}
	\makeatletter
\newcommand{\mylabel}[2]{#2\def\@currentlabel{#2}\label{#1}}
\makeatother

\allowdisplaybreaks[4]

\newtheorem{theorem}{Theorem}[section]
\newtheorem{lemma}[theorem]{Lemma}	
\newtheorem{corollary}[theorem]{Corollary}

\crefname{assumption}{assumption}{assumptions}
\Crefname{Assumption}{Assumption}{Assumptions}

\theoremstyle{remark}

\theoremstyle{definition}

\numberwithin{equation}{section} 

\usepackage{amsopn}

\begin{document}
\title{
	Efficient higher-order local time integration for Friedrichs' systems%
	\thanks{Funded by the Deutsche Forschungsgemeinschaft (DFG, German Research Foundation) --- Project-ID 258734477 --- SFB 1173}
}
\author{
  Marlis Hochbruck\thanks{Institute for Applied and Numerical Mathematics, Karlsruhe Institute of Technology, Germany}
  \and Jonas K\"{o}hler\footnotemark[2] \and Malik Scheifinger\footnotemark[2]
}
\date{\today}
\maketitle
\begin{abstract} 
	In this paper, we construct an efficient higher-order local time integration scheme for spatially discretized linear \Friedrichs systems. 
	In particular, our interest is in problems where only a few of the mesh elements are small while the majority of the elements is much larger. 
	The special combination of two methods like the \leapfrog method on the coarse part of the mesh and the \CN method on the fine part as was done in \cite{HocS16,HocK22} is not suitable for higher-order time integration.
	Therefore, we suggest to approximate the solution of the linear systems arising in each time step by a preconditioned Krylov subspace method, e.g., the quasi-minimal residual method by Freund and Nachtigal \cite{Freund_Nachtigal_1991}.
	The techniques developed here for linear problems also carry over to nonlinear problems, where linear systems of the same type arise within a Newton-type iteration.
	
	Motivated by the analysis of locally implicit methods by Hochbruck and Sturm \cite{HocS16}, we show how to construct a preconditioner in such a way that the number of iterations required by the Krylov subspace method to achieve a certain accuracy is bounded independently of the diameter of the small mesh elements. 
	We prove this behavior by using Faber polynomials and complex approximation theory.
	
	The cost to apply the preconditioner consists of the solution of a small linear system, whose dimension corresponds to the degrees of freedom within the fine part of the mesh (and its next coarse neighbors). 
	If this dimension is small compared to the size of the full mesh, the preconditioner is very efficient.
	
	We conclude by verifying our theoretical results with numerical experiments for the fourth-order Gau{\ss}-Legendre Runge--Kutta method.
\end{abstract}
\begin{keywords}
\lti,
Friedrichs' system,
higher-order method,
discontinuous Galerkin method,
\Krylovs,
error analysis
\end{keywords}

\begin{AMS}
65F10, 65F08, 65L04, 65L06, 65M22
\end{AMS}
  
\section{Introduction}
\Friedrichs systems play a crucial role in the modeling of various physical phenomena. Important examples are advection equations, acoustic wave equations, \maxwell, and Dirac equations; see \cite{MFO2023} for more details.
Though finite difference time-domain methods \cite{Yee_66} are still predominantly utilized to solve \maxwell, numerous other methods based on finite element or finite volume space discretizations have been introduced and are gaining more and more importance. 
    
The numerical solution of time-dependent partial differential equations by a method of lines approach involves first discretizing in space and then integrating the semi-discrete system in time.
For the space discretization, discontinuous Galerkin (dG) methods (see \cite{DiPE12} and references therein) are popular due to their flexibility in treating complex geometries and discontinuous material parameters. 
Since dG methods lead to block-diagonal mass matrices, explicit time integration schemes can be implemented very efficiently.
Unfortunately, explicit time integration methods are subject to the Courant--Friedrichs--Lewy (CFL) condition depending on the minimum diameter of all mesh elements, denoted by $\minmeshdiam$, that is, the time step $\ts$ needs to satisfy $\ts \lesssim \minmeshdiam$.  
Here, we are interested in locally refined meshes, where most of the mesh elements are coarse but a very small number of mesh elements are fine.
The latter require very small time steps on all mesh elements, which makes the computation inefficient.
An alternative is to use implicit time integrators which can eliminate the CFL condition completely but require solving a large linear system of equations in all degrees of freedom at each time step.
Unfortunately, this is expensive and might not even be feasible for large 3D problems.
  
To tackle this problem, locally implicit (LI) methods \cite{Ver11,DesLM13,HocS16,Chabassier_Imperiale_2016,DesLM17,HocS19} and local time-stepping (LTS) methods \cite{Pip06,DiaG09,GroM10,GroMM15} or more general local time integrators (LTI) from our previous work \cite{Hochbruck_Scheifinger_2026} were introduced and studied. 
While there is a rigorous analysis of LTI methods of order two for linear problems, it is not clear how to prove the stability for higher-order LTI methods constructed via composition methods \cite[Section II.4]{HaiLW06_book}.

In this paper, we introduce a new strategy to develop a higher-order time integration method to solve the general class of linear \Friedrichs systems possessing a two-field structure on a locally refined spatial grid in a computationally efficient way. 
For the time integration, we propose to use higher-order implicit \Gauss collocation methods \cite[Sec.~IV.5]{Hairer_Wanner_1996}. 
It is well known that these methods are A-stable and symplectic, and have maximum order $2\rkstages$ for nonstiff problems. Moreover,
for all $\rkstages$, rigorous error bounds are available for a class of problems which includes \Friedrichs systems, cf.\ \cite[Section~3.1]{Hochbruck_Pazur_2015}. This makes them the ideal candidates for our applications. Unfortunately, the analysis only holds for \rks which are not only A-stable but also algebraically stable and coercive, properties which are not satisfied for general implicit methods such as the computationally attractive DIRK methods.

The drawback of implicit schemes is that one has to solve large linear systems of equations in each time step.
For this, iterative solvers, especially (preconditioned) \Krylovs, are popular choices; see \cite{Saad_2003} and references therein.
For instance, block preconditioners for structure-preserving discretizations of \maxwell were proposed in \cite{Philips_Shadid_Cyr_18,Adler_Hu_Zik_17}. 
Other popular preconditioners include those based on multigrid \cite{Hiptmair_99,Lubomir_10,Aruliah_Ascher_02} and Schwarz domain decomposition \cite{Bonazzoli_Dolean_Graham_Spence_19,Cyr_Gander_Thomas_07}.
However, these preconditioning techniques have not yet been adapted to locally refined meshes.

We show that the linear systems of equations which arise in the implementation of \Gauss \rks for linear \Friedrichs systems can be transformed using techniques from \cite[Sec.~IV.5]{Hairer_Wanner_1996} in such a way that we obtain an equivalent linear system with a complex symmetric coefficient matrix.  Thus, we use the quasi-minimal residual (QMR) method to solve it iteratively \cite{Freund_92,Freund_Nachtigal_1991,Freund_Nachtigal_1994}.
Our main contribution is to construct a suitable preconditioner for
the QMR method and to prove that the number of iterations required to
reach a certain accuracy is independent of the fine mesh. Roughly
speaking, this preconditioner only acts on the fine mesh and is thus
very efficient if the number of fine mesh elements is small compared
to the total degrees of freedom.  Hence, the overall algorithm is
unconditionally stable, obeys rigorous error bounds which do not
deteriorate under mesh refinement,
and has linear complexity with a constant that only depends on the
coarse mesh.

The paper is organized as follows. 
In \Cref{sec:problem_setting}, we present our model problem and notation, and recall properties
of the spatial discretization of \Friedrichs systems using discontinuous Galerkin methods with central fluxes.
\Cref{sec:higher_order_RK} is dedicated to higher-order implicit \rks.
In \Cref{sec:preconditioning}, we recall known results on \Krylovsos, in particular error bounds. Our main contribution is to prove that the field of values of the preconditioned matrix is independent of the fine mesh and to combine this with error bounds for Krylov subspace methods.
Finally, in \Cref{sec:numerical_exp}, we verify our theoretical
findings with numerical experiments and compare the efficiency of the
higher-order locally implicit method with other local time integration
schemes \cite{HocS16,Hochbruck_Scheifinger_2026} and a low-storage Runge--Kutta method \cite{Busch_Diehl_Niegemann_2021}.

\section{Problem setting}\label{sec:problem_setting}
Let $\Omega \subset \realvec{d}, d=1,2,3,$ be an open, polygonal domain. 
The linear \Friedrichs system in a two field formulation reads
\begin{subequations}\label{eq:two-field-system}
\begin{align}
    \derv{t}\sol[first] 
    &= 
    \friedop[second]\sol[second] 
    + 
    \rhs*[first]{},
    & \Omega \times \real_+, \\
    \derv{t}\sol[second]
    &=
    \friedop[first]\sol[first] 
    +
    \rhs*[second]{},
    & \Omega \times \real_+, \\
    \sol[first](0) &= \sol[first]^0, \quad \sol[second](0) = \sol[second]^0, & \Omega,
\end{align}
where $\derv{t}$ denotes the partial derivative with respect to time and \(\rhs*[first]{} \colon \real_+ \to \Ltwovec[first]{\domain},\ \rhs*[second]{} \colon \real_+ \to \Ltwovec[second]{\domain}\) are forcing terms.
\end{subequations}
The \Friedrichs operators \(\friedop[first]\) and \(\friedop[second]\) are defined by
\begin{equation}\label{eq:two-field-friedops}
    \material[second]\friedop[first]\sol[first] = \sum_{i = 1}^{\dim} \friedcoeff{i}\partial_i\sol[first], 
    \qquad
    \material[first]\friedop[second]\sol[second] = \sum_{i = 1}^{\dim} \friedcoeff{i}^{\mathrm{T}}\partial_i\sol[second],
\end{equation}
with coefficients \(\friedcoeff{i}\in\real^{\problemdim[second]\times\problemdim[first]}\) for \(i = 0,\dots,\dim\). Here \(\partial_i\) denotes the partial derivative with respect to the \(i\)th spatial variable.
The material tensors \(\material[first]\in\Linftymatrix[first]{\domain}\), \(\material[second]\in\Linftymatrix[second]{\domain}\) are assumed to be symmetric and uniformly positive definite, and by \(\problemdim[first], \problemdim[second]\in\bbN\) we count the components of \(\sol[first]\) and \(\sol[second]\).
To make the system~\eqref{eq:two-field-system} \wellposed we have to prescribe boundary conditions. These are embodied into the domains \(\frieddom[first]\) and \(\frieddom[second]\), see~\cite{DiPE12,MFO2023} for details.
We assume that the boundary conditions do not introduce damping, \cf~\cite[Assump. 11.8]{MFO2023}, which leads to the adjointness property
\begin{equation}\label{eq:friedop-adjointness}
    \dgip{\friedop[first]\sol[first], \sol[second]} 
    = 
    - \dgip{ \sol[first],\friedop[second]\sol[second]}
    \qquad\text{for all } 
    \sol[first]\in\frieddom[first], \sol[second]\in\frieddom[second],
\end{equation}
in the weighted $\Ltwo$ inner products defined via
\begin{align}\label{eq:weighted-L2-ip}
    \dgip{\sol[first], \widetilde{u}} 
    &= 
    \ip{\material[first] \sol[first], \widetilde{u}}_{\Ltwo(\Omega)}, 
    \qquad \sol[first], \widetilde{u} \in \Ltwovec[first]{\Omega} .
\end{align}
The corresponding norm is denoted by $\dgnorm{\cdot}$. 
The inner product weighted by \(\material[second]\) is defined analogously, and it is always clear from the inserted argument which inner product or norm is meant.

For a full discretization of \eqref{eq:two-field-system}, we first discretize in space using a dG method with central fluxes on a suitable mesh $\mesh$ \cite{DiPE12},\cite[Section~4]{HocK20}. 
On this mesh we define the spaces 
\begin{subequations}
\begin{align}
    \dgspace[first] 
    &= 
    \{\sol[first]\in\Ltwo[\domain] \ \vert \ \sol[first]\vert_\cell\in\polytotaldegmost[\dGdegree](\cell) \text{ for all } \cell\in\mesh\}^{\problemdim[first]}, \\
    \dgspace[second] 
    &= 
    \{\sol[second]\in\Ltwo[\domain] \ \vert \ \sol[second]\vert_\cell\in\polytotaldegmost[\dGdegree](\cell) \text{ for all } \cell\in\mesh\}^{\problemdim[second]}, 
\end{align}
\end{subequations}
consisting of functions in \(\Ltwo(\domain)\) which are polynomials of total degree at most $\dGdegree$ on each mesh element. 
If it is clear by context which space we mean, we simply write \(\dgspace\).
The dG method then yields
\begin{subequations}\label{eq:semi-two-field-system}
	\begin{align}
	\derv{t}\solh[first]
    &= 
    \friedoph[second]\solh[second]
    + 
    \rhs[first]{}, & \real_+, \\
    \derv{t}\solh[second]
    &=
    \friedoph[first]\solh[first]
    +
    \rhs[second]{}, & \real_+, \\
    \solh[first](0) &= \solh[first][0], \quad \solh[second](0) = \solh[second][0], &
	\end{align}
\end{subequations}
where $\friedoph[first]$ and $\friedoph[second]$ are spatially discretized \Friedrichs operators, and $\solh[first][0]$, $\solh[second][0]$, and $\rhs[first]{}$, $\rhs[second]{}$ are $\Ltwo$ projections of $\sol[first]^0$, $\sol[second]^0$ and $\rhs*[first]{}$, $\rhs*[second]{}$ respectively, onto $\dgspace[first], \dgspace[second]$ with respect to the weighted inner products defined in \eqref{eq:weighted-L2-ip}.
We refer to \cite{DiPE12,HocK20} for details on the dG discretization.

For \(\sol[dummy] \in \{\sol[first], \sol[second]\}\), let $\{\phi_1^{\sol[dummy]},\dots,\phi_{\dofs[dummy]}^{\sol[dummy]}\}$ be a basis of $\dgspace[dummy]$. 
Then an element \(\solh \colon (0,T) \to \dgspace[dummy]\) can be represented as
\begin{equation*}
    \solh(t) = \sum_{j=1}^{N} \sol_j(t) \phi_j^{\sol[dummy]},
\end{equation*}
with coefficient vector \(\solvector*{t} = (\sol_j(t))_{j=1}^{\dofs[dummy]}\) .
This results in mass and stiffness matrices given by
\begin{subequations} \label{eq:mass-stiffness}
    \begin{align}
        (\massmatrix[first])_{l,j} &= \dgip{\phi_j^{\sol[first]},\phi_l^{\sol[first]}}, &
        (\friedmatrix*[first])_{l,j} &= \dgip{\friedoph[first]\phi_j^{\sol[first]},\phi_l^{\sol[second]}}, \\
        (\massmatrix[second])_{l,j} &= \dgip{\phi_j^{\sol[second]},\phi_l^{\sol[second]}}, &
        (\friedmatrix*[second])_{l,j} &= \dgip{\friedoph[second]\phi_j^{\sol[second]},\phi_l^{\sol[first]}}.
    \end{align}
\end{subequations}
Then, \eqref{eq:semi-two-field-system} is equivalent to the following
system of ordinary differential equations,
\begin{equation}\label{eq:matrix-two-field-system}
	\begin{aligned}
		\derv{t} \solvector[first]{} 
        &= 
        \friedmatrix[second]\solvector[second]{} + \rhsvector[first]{},  
        & \friedmatrix[second] = \massmatrix[first]^{-1} \friedmatrix*[second], \\
        \derv{t} \solvector[second]{}
        &= 
        \friedmatrix[first]\solvector[first]{} + \rhsvector[second]{},
        & \friedmatrix[first] = \massmatrix[second]^{-1} \friedmatrix*[first], \\
        \solvector*[first]{0} &= \solvector[first]{0}, 
        \quad 
        \solvector*[second]{0} = \solvector[second]{0}. &
    \end{aligned}
\end{equation}
Here, $\solvector[first]{0}$ and $\solvector[second]{0}$ are the coefficient vectors of $\solh[first][0]$ and $\solh[second][0]$ respectively.

With an abuse of notation, given  $\solh, \soltesth \in \dgspace[dummy]$, \(\sol[dummy] \in \{\sol[first], \sol[second]\}\), with coefficient vectors  $\solvector{}, \soltestvector{} \in \complexvec{\dofs[dummy]}$, we define
\begin{equation}\label{def:weighted_ip}
	\dgip{\solvector{}, \soltestvector{}}
    \da 
    \soltestvector{\ast} \massmatrix[dummy] \solvector{} = \dgip{\solh, \soltesth},
\end{equation}
and do so analogously for the induced norms in $\complexvec{\dofs[dummy]}$. Here, $\ast$ denotes the conjugate transpose. 
For the matrix norms, we also take these weights into account, since then these norms are equivalent to the operator norms of the discrete operators $\friedoph[first]$ and $\friedoph[second]$, \ie, we have
\begin{subequations}\label{eq:normweight}
    \begin{align}
        \opnorm*[first][second]{\friedmatrix[first]} 
        &=
        \sup_{\solvector{} \in \complexvec{\dofs[first]} \setminus \{\zero\}} \frac{
            \dgnorm{\friedmatrix[first]\solvector{}}
        }{
            \dgnorm{\solvector{}}
        }
        = 
        \sup_{\solh \in \dgspace[first] \setminus \{0\}} \frac{
            \dgnorm{\friedoph[first]\solh}
        }{
            \dgnorm{\solh}
        }
        = 
        \opnorm*[first][second]{\friedoph[first]}, \\
        \opnorm*[second][first]{\friedmatrix[second]} 
        &=
        \sup_{\solvector{} \in \complexvec{\dofs[second]} \setminus \{\zero\}} \frac{
            \dgnorm{\friedmatrix[second]\solvector{}}
        }{
            \dgnorm{\solvector{}}
        }
        = 
        \sup_{\solh \in \dgspace[second] \setminus \{0\}} \frac{
            \dgnorm{\friedoph[second]\solh}
        }{
            \dgnorm{\solh}
        }
        = 
        \opnorm*[second][first]{\friedoph[second]}.
    \end{align}
\end{subequations}
    
In this paper, we are interested in locally refined meshes. 
We refer to our earlier papers \cite{HocS16, HocS19, Hochbruck_Scheifinger_2026} for detailed explanations on these meshes, but to keep this paper self-contained, we introduce the necessary notation  here.	
A locally refined mesh is a mesh in which most of the mesh elements are coarse and very few mesh elements are fine. 
Let $\meshCoarse$ and $\meshFine$ denote the collection of all coarse and fine mesh elements, respectively. 
We denote by $\meshdiamFine$ and $\meshdiamCoarse$ the size of smallest mesh elements in $\meshFine$ and in $\meshCoarse$, respectively.
These two sets are related to each other via
\begin{equation*}\label{assumption:mesh_size}
    \meshdiamFine \ll \meshdiamCoarse \andeq \card{\meshFine}\ll \card{\meshCoarse}. 
\end{equation*}
Based on this decomposition of the mesh, the matrices defined in \eqref{eq:matrix-two-field-system}
can be split into
\begin{equation}\label{eq:split_def}
    \friedmatrix[first]
    =
    \friedmatrixmodified[first] + \friedmatrixnonstiff[first],
    \qquad
    \friedmatrix[second]
    =
    \friedmatrixmodified[second] + \friedmatrixnonstiff[second],
\end{equation}	
cf. \cite{HocS16,Hochbruck_Scheifinger_2026} for more details. 
The indices $f$ and $c$ indicate that the elements on which $\friedmatrixmodified[first],\friedmatrixmodified[second]$ act belong to the fine submesh \(\meshFine\) and the ones on which $\friedmatrixnonstiff[first],\friedmatrixnonstiff[second]$ act, belong to the coarse submesh \(\meshCoarse\).
In fact, it was shown in \cite{HocS16} that not only the fine elements have to be considered but also their direct coarse neighbors. 
        
Let us state some properties of these matrices which are inherited from their corresponding discrete operators, cf.\ \cite{HocS16}. First, $\friedmatrix[first]$ and $\friedmatrix[second]$ are adjoint to each other, i.e., for all $\solvector[first]{} \in \complexvec{\dofs[first]}, \solvector[second]{} \in \complexvec{\dofs[second]}$, it holds
\begin{equation}\label{eq:full_adjoint} 
	\dgip{\friedmatrix[first]\solvector[first]{}, \solvector[second]{}} 
    =
    -\dgip{\solvector[first]{}, \friedmatrix[second]\solvector[second]{}}.    
\end{equation}
It is easy to verify that these split matrices preserve the adjointness property of their respective full ones, i.e.,
\begin{equation}\label{eq:split_adjoint}
    \dgip{\friedmatrixnonstiff[first]\solvector[first]{}, \solvector[second]{}} 
    = 
    -\dgip{\solvector[first]{}, \friedmatrixnonstiff[second]\solvector[second]{}},
    \qquad
    \dgip{\friedmatrixmodified[first]\solvector[first]{}, \solvector[second]{}} 
    = 
    -\dgip{\solvector[first]{}, \friedmatrixmodified[second]\solvector[second]{}}.
\end{equation}	
In addition, they satisfy 
\begin{equation}\label{eq:split_product}
	\friedmatrixnonstiff[second]\friedmatrixnonstiff[first] 
    =
    \friedmatrixnonstiff[second]\friedmatrix[first],
    \qquad
    \friedmatrixmodified[second]\friedmatrixmodified[first] 
    =
    \friedmatrixmodified[second]\friedmatrix[first] .
\end{equation}	
Furthermore, combining the above properties, it holds
\begin{equation}\label{eq:split_norm}
    \dgnorm{\friedmatrix[first] \solvector[first]{}}^2
    =
    \dgnorm{\friedmatrixnonstiff[first] \solvector[first]{}}^2
    +
    \dgnorm{\friedmatrixmodified[first] \solvector[first]{}}^2.
\end{equation}	

One of the important results from \cite{HocS16} is that the explicit split matrices $\friedmatrixnonstiff[first]$ and $\friedmatrixnonstiff[second]$ can be bounded independently of the fine mesh, that is, using the weighted norms in \eqref{eq:normweight}, we have 
\begin{equation}
	\opnorm*[first][second]{\friedmatrixnonstiff[first]} \leq c \meshdiamCoarse[-1], 
    \qquad 
    \opnorm*[second][first]{\friedmatrixnonstiff[second]} \leq c \meshdiamCoarse[-1],
\end{equation}		
with a constant $c$ that is independent of $\meshdiamFine$ and $\meshdiamCoarse$.

In \cite{HocS16,HocS19}, these split matrices were constructed to develop local time integration methods.
Here, we use these split matrices in a different way, namely to construct preconditioners which improve the performance of \Krylovs.

\section{Higher-order implicit Runge--Kutta methods}\label{sec:higher_order_RK}
In this section, we consider the time integration of \eqref{eq:matrix-two-field-system} by an \sstage\ implicit Runge--Kutta (RK) methods given by its matrix $\rkmatrix=(a_{ij})_{i,j=1}^\rkstages$, weights $b_i$ and nodes $c_i$, $i=1,\dots,\rkstages$, cf., \cite[Section II.1]{HaiLW06_book}.

	The two-field linear \Friedrichs system~\eqref{eq:two-field-system} fits into the general framework considered in \cite[Eq.~(2.8)]{Hochbruck_Pazur_2015} since the \Friedrichs operator \(\friedop\) is skew-adjoint on its domain \(\frieddom\). 
	Therefore, the time discretization error bound~\cite[Thm. 3.5]{Hochbruck_Pazur_2015} holds under certain regularity assumptions on the exact solution of~\eqref{eq:two-field-system}. 
	Regarding the full discretization error using a central fluxes
        discontinuous Galerkin discretization in space we refer to
       \cite[Thm~5.4 and Rem~5.7]{Hochbruck_Pazur_2015}
	Even though these results are derived for linear \Maxwells equations, it is straightforward to extend them to two-field linear \Friedrichs systems discretized in space.

To simplify the presentation, we write \eqref{eq:matrix-two-field-system} in the compact form
\begin{equation}\label{eq:semilinear_compact}
	\begin{aligned}
		\derv{t} \solvector{}
		&=
		\friedmatrix \solvector{} + \rhsvector{}, \qquad (0,T), \\
		\solvector{0} &= \solvector*{0},
	\end{aligned}
\end{equation}
where
\begin{equation*}
	\solvector{} = \begin{pmatrix}
		\solvector[first]{} \\
		\solvector[second]{}
	\end{pmatrix},
	\quad
	\rhsvector{} = \begin{pmatrix}
        \rhsvector[first]{}  \\
        \rhsvector[second]{}
    \end{pmatrix} \in \realvec{\dofs},
    \andeq
    \friedmatrix = \begin{pmatrix}
      	\zeromatrix & \friedmatrix[second] \\
      	\friedmatrix[first] & \zeromatrix
    \end{pmatrix} \in \realmat{\dofs}{\dofs},
\end{equation*}
with \(\dofs \da \dofs[first] + \dofs[second]\).
Assume that we already computed an approximation $\solvector{n}\approx \solvector*{\tn{n}}$ at time $\tn{n} = n\ts$, where $\ts>0$ denotes the step size.
Then, the implicit Runge--Kutta method applied to \eqref{eq:semilinear_compact} leads to the following coupled linear system of equations for the intermediate stages $\rkstage{i} \approx \solvector*{\tn{n} + c_i\ts}$
\begin{subequations}
\begin{equation}\label{eq:intermediate_rk}
	\rkstage{i} 
	= 
	\solvector{n} + \ts \sum_{j=1}^\rkstages a_{ij} \bigl( 
		\friedmatrix \rkstage{j} + \rkrhs{j}
	\bigr), \quad i=1,\dots,\rkstages,
\end{equation}  	
where $\rkrhs{j} = \rhsvector*{\tn{n} + c_j \ts}$.
The new approximation $\solvector{n+1} \approx \solvector*{\tn{n+1}}$ is then given explicitly by
\begin{equation}\label{eq:rk}
	\solvector{n+1}
	=
	\solvector{n}
	+
	\ts\sum_{i=1}^\rkstages b_i \bigl(\friedmatrix \rkstage{i} + \rkrhs{i} \bigr).
\end{equation}
\end{subequations}
   
\subsection{\gcs}   
We use \gcs to construct higher-order implicit RK methods. 
It is well known that these methods are algebraically stable \cite[Theorem
IV.12.9]{Hairer_Wanner_1996} and the RK matrix $\rkmatrix$ is invertible
\cite[Section~IV.14]{Hairer_Wanner_1996}. 
In addition, the error analysis for linear wave-type problems \cite[Section~3.1]{Hochbruck_Pazur_2015} makes use of the existence of a diagonal positive definite matrix $\Dmhat$ and a positive scalar $\eta>0$ such that
\begin{equation}\label{eq:coercitivity}
	\bv^\top \Dmhat\inv{\rkmatrix}\bv \geq \eta \bv^\top \Dmhat \bv, 
	\quad \text{for all} \quad \bv\in\realvec{\rkstages}.
\end{equation}  
Here, $^\top$ denotes the transpose.
For \gcs, the coercitivity condition \eqref{eq:coercitivity} is satisfied for 
$\Dmhat=\Bmhat(\Cmhat^{-1}-\idmatrix[\rkstages])$, where 
$\Bmhat\da\text{diag}(b_1,\dots,b_s)$,
$\Cmhat\da\text{diag}(c_1,\dots,c_s)$, and
$\idmatrix[\rkstages]$ is the identity matrix of size $\rkstages$, cf.\
\cite[Theorem IV.14.5]{Hairer_Wanner_1996}.

To obtain an efficient implementation for solving \eqref{eq:intermediate_rk}, we use Kronecker products \cite[Section VIII.6]{HaiLW06_book} to rewrite it as
\begin{equation}\label{eq:intermediate_rk_reduced}	
	\rkstage{} 
	= 
	\onevector[\rkstages] \otimes \solvector{n} 
	+ 
	\ts \bigl( 
		(\rkmatrix \otimes \friedmatrix) \rkstage{} 
		+ 
		(\rkmatrix \otimes \idmatrix[\dofs]) \rkrhs{}
	\bigr),
\end{equation}
where $\rkstage{} = (\rkstage{i})_{i=1}^\rkstages$, $\rkrhs{} = \bigl(\rkrhs{i}\bigr)_{i=1}^\rkstages \in \complexvec{\dofs\rkstages}$, $\idmatrix[\dofs]$ is the identity matrix of size $\dofs$, and the term $\onevector[\rkstages]$ denotes the vector in $\realvec{\rkstages}$ consisting of all ones.
Diagonalization of $\rkmatrix$ yields a nonsingular matrix $\mathbf{T}\in \complexmat{\rkstages}{\rkstages}$ containing the eigenvectors and a diagonal matrix $\lark\in \complexmat{\rkstages}{\rkstages}$ containing eigenvalues $\rkmatrixeig{i}$, such that
\begin{equation} \label{eq:eigendecomp}
	\inv{\mathbf{T}}\rkmatrix \mathbf{T} = \lark,  
	\qquad 
	\lark = \diag(\rkmatrixeig{1},\ldots,\rkmatrixeig{\rkstages}).
\end{equation}
Substituting $\rkmatrix=\mathbf{T} \lark \inv{\mathbf{T}}$ into \eqref{eq:intermediate_rk_reduced} and performing some Kronecker product operations leads to $\rkstages$ decoupled linear systems of the form
\begin{equation}\label{eq:decoupled_gauss}
    (\idmatrix[\rkstages]\otimes \idmatrix[\dofs] - \ts(\lark \otimes \friedmatrix))\rkstage*{} 
	= 
	\rkstage*{}^0 + \ts(\lark \otimes \idmatrix[\dofs])\rkrhs*{},
\end{equation} 	 
    where,  
\begin{equation}\label{eq:z}
    \rkstage*{}
	=
	(\Tinv \otimes \idmatrix[\dofs])\rkstage{},
	\quad
	\rkstage*{}^0
	=
	(\Tinv \otimes \idmatrix[\dofs])(\onevector[\rkstages] \otimes \solvector{n}),
	\quad
    \rkrhs*{}
	=
	(\Tinv \otimes \idmatrix[\dofs])\rkrhs{}.
\end{equation}

Note that $\rkmatrix$ might have complex conjugate pairs of eigenvalues. For such eigenvalues (say $\rkmatrixeig{j} = \overline{\rkmatrixeig{i}}$), the corresponding linear systems are
\begin{subequations} \label{eq:complex_conjugate_gauss}
	\begin{align}
	(\idmatrix[\dofs] - \ts \rkmatrixeig{i} \friedmatrix)\rkstage*{i} 
	&= 
	\rkstage*{i}^0
	+
	\ts\rkmatrixeig{i}\rkrhs*{i}, \label{eq:cc_gauss} \\
	(\idmatrix[\dofs] - \ts \overline{\rkmatrixeig{i}} \,\friedmatrix)\rkstage*{j} 
	&= 
	\rkstage*{j}^0
	+
	\ts \overline{\rkmatrixeig{i}}\,\rkrhs*{j}. \label{eq:cc_gauss1}
	\end{align}
\end{subequations}
In the homogeneous case, i.e., $\rhsvector{} \equiv 0$ which leads to $\rkrhs*{} \equiv 0$, the first term on the right-hand sides of  \eqref{eq:cc_gauss} and \eqref{eq:cc_gauss1} are conjugate to each other and so are the solutions.
\begin{lemma}
	If $\rhsvector{} \equiv 0$ and $\rkmatrixeig{j} = \overline{\rkmatrixeig{i}}$, then the solutions of \eqref{eq:complex_conjugate_gauss} satisfy $\rkstage*{j} = \rkstage*{i}[\overline]$.
\end{lemma}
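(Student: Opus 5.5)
Since $\rkmatrix$ is a real matrix, I will choose the eigenvector matrix $\mathbf{T}$ in \eqref{eq:eigendecomp} so that the eigenvector for $\rkmatrixeig{j} = \overline{\rkmatrixeig{i}}$ is the complex conjugate of the eigenvector for $\rkmatrixeig{i}$. This is possible because $\rkmatrix \mathbf{t}_i = \rkmatrixeig{i}\mathbf{t}_i$ implies $\rkmatrix\overline{\mathbf{t}_i} = \overline{\rkmatrixeig{i}}\,\overline{\mathbf{t}_i}$. With this choice the $j$-th column of $\mathbf{T}$ is the conjugate of the $i$-th column. Consequently $\overline{\mathbf{T}} = \mathbf{T}\mathbf{P}$, where $\mathbf{P}$ is the permutation swapping $i$ and $j$ (and conjugate pairs generally, fixing real eigenvectors, which can be taken real). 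Inverting gives $\overline{\Tinv} = \mathbf{P}\Tinv$, so the $j$-th row of $\Tinv$ equals the conjugate of its $i$-th row.

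Next I will compute the right-hand side of \eqref{eq:cc_gauss1} in the homogeneous case. Since $\rkrhs*{} \equiv 0$, by \eqref{eq:z} we have $\rkstage*{}^0 = (\Tinv\onevector[\rkstages])\otimes \solvector{n}$, so $\rkstage*{k}^0 = (\Tinv\onevector[\rkstages])_k\,\solvector{n}$. The vectors $\onevector[\rkstages]$ and $\solvector{n}$ are real; the latter is real because the method applied to a real problem produces real iterates. Therefore the row relation above gives $(\Tinv\onevector[\rkstages])_j = \overline{(\Tinv\onevector[\rkstages])_i}$, and hence $\rkstage*{j}^0 = \overline{\rkstage*{i}^0}$.

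Finally, I will conjugate \eqref{eq:cc_gauss}. Because $\friedmatrix$ and $\idmatrix[\dofs]$ are real, this yields $(\idmatrix[\dofs] - \ts\overline{\rkmatrixeig{i}}\,\friedmatrix)\overline{\rkstage*{i}} = \overline{\rkstage*{i}^0} = \rkstage*{j}^0$. Thus $\overline{\rkstage*{i}}$ solves \eqref{eq:cc_gauss1}. Uniqueness then gives $\rkstage*{j} = \overline{\rkstage*{i}}$.

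Uniqueness requires $\idmatrix[\dofs] - \ts\overline{\rkmatrixeig{i}}\,\friedmatrix$ to be nonsingular. The matrix $\friedmatrix$ is skew-adjoint in the weighted inner product by \eqref{eq:full_adjoint}, so its eigenvalues are purely imaginary. For Gau\ss{} collocation methods the eigenvalues of $\rkmatrix$ have positive real part, by the coercivity \eqref{eq:coercitivity} or A-stability. Hence $1/(\ts\overline{\rkmatrixeig{i}})$ has nonzero real part and is not an eigenvalue of $\friedmatrix$. The only delicate point is the normalization of $\mathbf{T}$; the argument assumes the conjugate-pair choice of eigenvectors, which I would state explicitly.
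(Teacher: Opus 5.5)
Your proof is correct and follows the paper's argument. You choose the eigenvectors of $\rkmatrix$ in conjugate pairs, so that $\Tbar = \mathbf{T}\Pmhat$ with a symmetric permutation $\Pmhat$; for real $\solvector{n}$ this gives $\overline{\rkstage*{i}^0} = \rkstage*{j}^0$, and conjugating \eqref{eq:cc_gauss} shows that $\overline{\rkstage*{i}}$ solves \eqref{eq:cc_gauss1}. Two points the paper leaves implicit are made precise in your version: the normalization of $\mathbf{T}$, and uniqueness via nonsingularity of $\idmatrix[\dofs] - \ts\overline{\rkmatrixeig{i}}\,\friedmatrix$, which follows from the skew-adjointness of $\friedmatrix$ and $\re[\rkmatrixeig{i}] > 0$.
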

\begin{proof}
The RK matrix $\rkmatrix$ is real and thus complex eigenvalues and eigenvectors appear in complex conjugate pairs. Hence there exists a symmetric permutation matrix $\Pmhat \in \realmat{\rkstages}{\rkstages}$ \rkstages.t., 
\begin{equation}
	\Tbar = \mathbf{T} \Pmhat, \qquad \overline{\lark} = \Pmhat \lark \Pmhat,
\end{equation}
which implies $\Tinv = \Pmhat \inv{\overline{\mathbf{T}}}$. 
We choose an arbitrary index $i \in \{0,\ldots,\rkstages\}$ corresponding to a complex eigenvalue $\rkmatrixeig{i} \not\in\mathbb{R}$ and define the index $j$ such that $\be_j = \Pmhat \be_i$. 	
By \eqref{eq:z} and $\solvector{n} \in \realvec{\dofs}$ we have 
\begin{equation}
	\rkstage*{i}[\overline]^0 
	= 
	(\be_i^\top \otimes \idmatrix[\dofs]) \rkstage*{}[\overline]^0  
	= 
	(\be_i^\top \inv{\overline{\mathbf{T}}} \onevector[\rkstages] \otimes \solvector{n}) 
	= 
	((\Pmhat\be_i)^\top \inv{\mathbf{T}} \onevector[\rkstages]) \otimes \solvector{n} 
	= 
	\rkstage*{j}^0.
\end{equation}
Conjugating  \eqref{eq:cc_gauss} proves that $\rkstage*{i}[\overline]$ solves \eqref{eq:cc_gauss1}.
\end{proof}
	
In addition to this, $\rkstage*{i}$ and $\rkrhs*{i}$ in \eqref{eq:cc_gauss} can be
further decomposed into
\begin{equation*}
	\rkstage*{i} = \begin{pmatrix}
		\rkstage*[first]{i} \\
		\rkstage*[second]{i} 
	\end{pmatrix},
	\quad
	\rkrhs*{i} = \begin{pmatrix}
		\rkrhs*[first]{i} \\	
		\rkrhs*[second]{i} 
	\end{pmatrix},
\end{equation*}
where $\rkstage*[first]{i}, \rkstage*[second]{i}$ denote unknowns corresponding to the transformed intermediate stages of $\solvector[first]{}$ and $\solvector[second]{}$ respectively. Taking the Schur complement, the linear systems in \eqref{eq:cc_gauss} can be reduced to
\begin{equation}\label{eq:schur}
	(\idmatrix[\dofs[first]] - \alpha_i\friedmatrix[second]\friedmatrix[first]) \rkstage*[first]{i} 
	=
	\rkstage*[first]{i}^0 
	+ 
	\ts \rkmatrixeig{i} (\friedmatrix[second] \rkstage*[second]{i}^0 + \rkrhs*[first]{i})
	+ 
	\alpha_i \friedmatrix[second] \rkrhs*[second]{i},
	\qquad
	\alpha_i \da \ts^2 \rkmatrixeig{i}^2 \in \complex,
\end{equation}
to compute the $\sol[first]$-component of $\rkstage*{i}$. 
After solving this linear system of dimension $\dofs[first]$, the $\sol[second]$-component of $\rkstage*{i}$ can be calculated explicitly via
\begin{equation}
  \rkstage*[second]{i}
  =
  \rkstage*[second]{i}^0 
  + 
  \ts \rkmatrixeig{i} (\friedmatrix[first] \rkstage*[first]{i} + \rkrhs*[second]{i}) .
\end{equation}
An efficient implementation of an \sstage implicit \rk using Gau{\ss} collocation points thus requires solving a linear system of the form
\begin{equation}\label{eq:linear_A}
	\systemmatrix\systemsol 
	= 
	\systemrhs
	\where
	\systemmatrix \da \idmatrix[\dofs[first]] - \alpha\friedmatrix[second]\friedmatrix[first], 
\end{equation}	
with a possibly complex parameter $\alpha$, in each time step. 
Therefore, the size of the linear system to be solved is reduced from $\rkstages(\dofs[first] + \dofs[second])$ in \eqref{eq:intermediate_rk_reduced} to $\dofs[first]$ in \eqref{eq:linear_A}.
The adjointness property \eqref{eq:full_adjoint} implies that $\friedmatrix[second]\friedmatrix[first]$ is symmetric with respect to the discrete inner product $\dgip{\cdot,\cdot}$  defined in \eqref{def:weighted_ip}. 
Hence, $\systemmatrix$ is complex symmetric, that is,
\begin{equation*}
	\dgip{\systemmatrix\systemsol, \systemsol}
	=
	\dgip{\systemsol, \overline{\systemmatrix}\systemsol},
	\qquad 
	\systemsol \in \complexvec{\dofs[first]}.
\end{equation*}   
and also normal.   
If $\alpha \in \real$, then $\systemmatrix \in \realmat{\dofs}{\dofs}$ is symmetric. 
However, for  $\alpha \not \in \real$ it follows immediately that $\systemmatrix \neq \systemmatrix^{\ast}$ with respect to $\dgip{\cdot,\cdot}$.  
Moreover, for
\begin{equation}\label{property:alpha}
	\alpha \in \complex \backslash \{z\in \real: z  < 0\},
\end{equation}    
the matrix $\systemmatrix$ is invertible. For \gcs, the coercivity condition \eqref{eq:coercitivity} guarantees that the eigenvalues of $\rkmatrix$ are not purely imaginary, and hence \eqref{property:alpha} is satisfied.

\section{Preconditioned \Krylovs}\label{sec:preconditioning}
	In this section, we aim at designing a tailored preconditioner for
	solving the sparse linear system \eqref{eq:linear_A} by a
	preconditioned \Krylov. We will prove that 
	the number of Krylov iterations 
	to achieve a certain tolerance is
	independent of the fine mesh. The overall method can be considered as
	a locally implicit scheme, because it only requires the solution of a
	small linear system as it is required for the second-order method in
	\cite{HocS16}.

	We remark that in  \Cref{sec:Krylov_results},
	we consider the Euclidean inner product and norm in $\complex^N$,
	 but using weighted inner products would be possible as well. 	
\subsection{Krylov subspace methods for complex
  symmetric matrices}\label{sec:Krylov_results}
	For a nonsingular, complex symmetric matrix $\generalMatrix=\generalMatrix^\top \in \complex^{N\times N}$ and  a given
	vector $\bff\in\complexvec{\dofs}$,  we consider the linear system
\begin{equation}\label{eq:linear}
   \generalMatrix \bx =\bff.
\end{equation}
	Given an initial guess $\bxz\in\complexvec{\dofs}$ and its initial residual vector
	$\brz=\bff-\generalMatrix\bxz$, a \Krylov\ yields an approximation of the form
\begin{equation} \label{eq:cs-qmr-x}
	\bxm=\bxz+\Wm\bym, \qquad m=1,2,\ldots,
\end{equation}
	where $\Wm\in\complex^{N\times m}$ is a basis of the $m$th Krylov subspace
\begin{equation*}
	\Kry\da \mbox{span}(\brz,\generalMatrix\brz,\dots,\generalMatrix^{m-1} \brz),
\end{equation*}
	and $\bym\in\complex^m$ is a suitable coefficient vector. The choices of  $\Wm$ and $\bym$ characterize the \Krylov,
	cf.\   \cite{Freund_Golub_Nachtigal_1992,Gutknecht_1997,Saad_2003} for more details.

	To exploit the complex symmetric structure of $\generalMatrix$,
	we suggest to use the quasi-minimal residual (QMR) algorithm for complex symmetric matrices 
	\cite[Section~3]{Freund_92}, which is based on the complex symmetric
	Lanczos process. Here, $\Wm$ satisfies
\begin{equation} \label{eq:cs-lanczos}
  	\generalMatrix  \Wm =  \Wmo \Hmt, \qquad
  	\Dmo \Hmt =  \Wmo^\top \generalMatrix \Wm, %
\end{equation}
	with a diagonal matrix
	$\Dmo = \Wmo^\top \Wmo \in \complex^{(m+1)\times (m+1)}$. The complex
	symmetry of $\generalMatrix$ implies that $\Hmt\in\complex^{(m+1) \times m}$
	is tridiagonal and the upper $m\times m$ submatrix of $\Dmo\Hmt$ is
	again complex symmetric.  $\Hmt$ has full column rank $m$ until $\Kry$
	becomes a $\generalMatrix$-invariant subspace.

	With $\beta = \norm{\brz}$, the QMR approximation is defined as
	\begin{equation} \label{eq:cs-qmr}
		\bxm = \bxz+\Wm \bym,  \qquad
		\bym = \beta \Hmt^+ \be_1,
		\qquad \Hmt^+ = (\HmtH \Hmt)^{-1} \HmtH,
	\end{equation}
	where $\be_1$ denotes the first canonical unit vector. 
	The corresponding residual can be written as
	\begin{equation*}
		\brm=\bff-\generalMatrix \bxm = \Wmo (\beta \be_1 - \Hmt \bym).
	\end{equation*}
	Since $\Hmt$ has full column rank, $\bym$ is equivalently characterized as the unique solution of the least-squares problem
	\begin{equation*}
		\bym
		= 
		\operatorname*{argmin}_{\by\in\complex^m}
		\norm{\beta\be_1-\Hmt\by}.
	\end{equation*}

	The advantage of this algorithm compared to  methods based on the
	Arnoldi process (e.g., GMRES) is that it uses three-term recurrences
	for the computation of the basis as well as for the approximation. It
	can be combined with look-ahead strategies \cite{Freund_Gutknecht_Nachtigal_1993}
	to prevent breakdowns of
	the Lanczos process, which might appear because 
	it constructs a basis which is orthogonal w.r.t.\ the
	indefinite bilinear form $\ipi{\bx,\by}=\bx^\top\by$,
	instead of the Euclidean inner product $\ip{\bx,\by}=\bx^* \by$, see
	\cite[Section~4]{Freund_92}. For the sake of presentation, we assume
	that breakdowns do not appear until a
	sufficiently accurate solution is computed, but we note that with minor 
	modifications, our analysis also holds for the (complex symmetric)
	look-ahead Lanczos method \cite{Freund_Gutknecht_Nachtigal_1993}.
	This assumption ensures that
\begin{equation}\label{eq:Dinv}
    \norm{\Dmo^{-1}} \leq \delta,
\end{equation}
	for a given (small) tolerance $\delta>0$, because otherwise,
    one would switch to the look-ahead version of the Lanczos
    process.

	In the following, $\calPm$ denotes the set of all polynomials
	over  $\complex$ of degree at most $m$.

\begin{theorem} \label{thm:cs-qmr-conv}
	Let $\generalMatrix$ be a nonsingular, complex symmetric matrix, and $\bxm$ be the
	QMR approximation \eqref{eq:cs-qmr} after $m$ steps.
	 Then the error of the QMR method satisfies
\begin{equation}\label{eq:err_General}
	\norm{\invGeneral\bff - \bxm} \le \norm{\invGeneral \Pm} \minpm\norm{\pmGeneralrz}
\end{equation}
	with a projection matrix $\Pm$ given by
\begin{equation*}
	\Pm = \idmatrix[\dofs[first]] - \Wmo  \Hmt \Hmpt  \Dmo^{-1} \Wmo^\top.
\end{equation*}
	Moreover, if  $\norm{\Wmo \be_j}=1$, $j=1,\ldots,m+1$, and \eqref{eq:Dinv} holds,
	we   have
 \begin{equation}\label{eq:normPm}
     \norm{\Pm} \leq 1 + (m+1)\delta.          
   \end{equation}
\end{theorem}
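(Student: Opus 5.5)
The plan is to write the QMR residual as a projection of an arbitrary Krylov residual, and then bound the projection entrywise.

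\textbf{Step 1: the error is $\invGeneral \brm$.} Since $\invGeneral\bff - \bxm = \invGeneral\brm$, it suffices to show $\brm = \Pm \pmGeneralrz$ for every $p_m\in\calPm$ with $p_m(0)=1$. The estimate \eqref{eq:err_General} then follows by taking norms and minimizing over $p_m$.

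\textbf{Step 2: reduce an arbitrary Krylov residual to coordinates.} Fix such a $p_m$. There is $\by\in\complex^m$ with $p_m(\generalMatrix)\brz = \brz - \generalMatrix\Wm\by$, because $\brz - p_m(\generalMatrix)\brz$ lies in $\generalMatrix\Kry$. The first Lanczos vector is a multiple of $\brz$, so $\brz = \beta\Wmo\be_1$ after normalization; in general $\brz = \Wmo\bm{c}$ with $\bm{c}$ a multiple of $\be_1$. Using \eqref{eq:cs-lanczos},
\begin{equation*}
  p_m(\generalMatrix)\brz = \Wmo(\beta\be_1 - \Hmt\by) =: \Wmo\bw .
\end{equation*}

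\textbf{Step 3: recover coordinates and project.} The relation $\Wmo^\top\Wmo = \Dmo$ gives $\bw = \Dmo^{-1}\Wmo^\top p_m(\generalMatrix)\brz$. Applying $\Pm$ yields
\begin{equation*}
  \Pm p_m(\generalMatrix)\brz = \Wmo\bigl(\idmatrix - \Hmt\Hmpt\bigr)\bw .
\end{equation*}
Since $\Hmt\Hmpt$ is the orthogonal projector onto the range of $\Hmt$ and $\Hmt\by$ lies in that range, this equals $\Wmo\bigl(\idmatrix - \Hmt\Hmpt\bigr)\beta\be_1$. By \eqref{eq:cs-qmr} this is $\Wmo(\beta\be_1 - \Hmt\bym) = \brm$. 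This proves the first claim.

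\textbf{Step 4: bound $\norm{\Pm}$.} Write $\Pm = \idmatrix - \Wmo Q\Dmo^{-1}\Wmo^\top$ with $Q = \Hmt\Hmpt$. Here $Q$ is an orthogonal projector, so $\norm{Q}\le 1$. Unit columns give $\norm{\Wmo} \le \norm{\Wmo}_F = \sqrt{m+1}$, and the same bound holds for $\Wmo^\top$. Together with \eqref{eq:Dinv} this gives $\norm{\Pm}\le 1 + (m+1)\delta$.

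The main obstacle is Step 3: one must check that the indefinite (non-Hermitian) bilinear "inverse" $\Dmo^{-1}\Wmo^\top$ really recovers coordinates for vectors in $\operatorname{range}\Wmo$. It does, because $\Wmo$ has full column rank and $\Wmo^\top\Wmo = \Dmo$ is diagonal and nonsingular. The remaining steps are routine bookkeeping with the Lanczos relations.
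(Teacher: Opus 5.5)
Your proof is correct and follows essentially the paper's route. The paper (following Hochbruck--Lubich) shows $\Pm\generalMatrix\Wm = 0$ and $\Pm\brz = \brm$; you combine both facts into the single coordinate computation $\Pm\Wmo(\beta\be_1 - \Hmt\by) = \Wmo(\idmatrix - \Hmt\Hmpt)\beta\be_1$, and your bound on $\norm{\Pm}$ is the same argument, with $\norm{\Hmt\Hmpt}\le 1$ made explicit and the correct factor $\norm{\Wmo}\le\sqrt{m+1}$ (the paper writes $\norm{\Wm}\le\sqrt{m}$).
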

\begin{proof}
	Analogously to the proof of \cite[Theorem 2]{Hochbruck_Lubich_98} it can be seen from \eqref{eq:cs-lanczos} that $\Pm \generalMatrix\Wm=0$. Using \eqref{eq:cs-qmr} this implies
	\begin{equation*}
		\invGeneral\bff-\bxm = \invGeneral \Pm \brz   = \invGeneral \Pm \pmGeneral \brz
	\end{equation*}
	for all $p_m \in \calPm$ with $p_m(0)=1$.

	The bound on $\norm{\Pm}$ follows from \eqref{eq:Dinv} and $\norm{\Wm} \leq \matsqrt{m}$.
\end{proof}

Since $\norm{\pmGeneralrz} \leq \norm{\pmGeneral} \norm{\brz}$, it remains to bound 
\begin{equation*}
	\minpm\norm{\pmGeneral}.
\end{equation*}
This can be done by means of Faber polynomials and complex
approximation theory, cf.\ \cite{Eiermann_1989}, based on
a superset $\calS$ of the field of values of $\generalMatrix$ defined as
\begin{equation*}
	\mathcal{F}(\generalMatrix) \da
	  \{ \rhoGeneral(\bu), \bu \in \complexvec{\dofs[first]}, \bu \neq \zero\},
  \qquad \rhoGeneral(\bu) \da
         \frac{\ip{\bu,\generalMatrix\bu}}{\ip{\bu,\bu}} .
\end{equation*}

\begin{theorem} \label{thm:poly-bound-conf}
    Let $\calS\subset \complex$ be a convex and bounded superset of
    $\mathcal{F}(\generalMatrix)$ with $0 \not\in \calS$ and let $\phi$ be the
  	conformal map which
  	maps the exterior of $\calS$ onto the exterior of the unit disc with
	 $\phi(\infty)=\infty$.  Then
\begin{equation}\label{eq:poly-bound}
	\minpm\norm{\pmGeneral} \leq (1+\matsqrt{2})
    \min\left\{ \frac{3}{\abs{\phi(0)}^m},  \frac{2}{\abs{\phi(0)}^m-1}\right\}.
\end{equation}
\end{theorem}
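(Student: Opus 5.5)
We use Faber polynomials for $\calS$ together with the Crouzeix--Palencia bound $\norm{f(\generalMatrix)} \le (1+\matsqrt{2})\max_{z\in\mathcal{F}(\generalMatrix)}\abs{f(z)}$, valid for every function $f$ analytic in a neighbourhood of the (closed) field of values. The constant $1+\matsqrt{2}$ in \eqref{eq:poly-bound} suggests exactly this inequality. Since $\mathcal{F}(\generalMatrix)\subset\calS$, for every admissible $p_m$ we get
\begin{equation*}
  \norm{p_m(\generalMatrix)} \le (1+\matsqrt{2}) \max_{z\in\calS}\abs{p_m(z)} .
\end{equation*}
Hence it suffices to bound the scalar constrained approximation problem $\min_{p_m(0)=1}\max_{z\in\calS}\abs{p_m(z)}$ by $\min\{3/\abs{\phi(0)}^m,\,2/(\abs{\phi(0)}^m-1)\}$. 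Note that $\abs{\phi(0)}>1$ because $0\notin\calS$ (if $0\in\partial\calS$ the bound is trivial, being $\ge 3$ with $p_m\equiv1$).

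For the first candidate, we take the normalized Faber polynomial $p_m(z)=F_m(z)/F_m(0)$, where $F_m$ is the $m$th Faber polynomial associated with $\calS$. For convex $\calS$, Kövari--Pommerenke give $\abs{F_m(z)}\le 2$ on $\calS$. For the lower bound at $0$, we write $F_m(z)=\phi(z)^m+E_m(z)$ outside $\calS$. We then need a bound of the type $\abs{E_m(0)}\le 1$; for convex sets the estimate $\abs{F_m(w)-\phi(w)^m}\le 1$ for $w\notin\calS$ is known (Eiermann's paper, via the Grunsky coefficients and convexity). This gives $\abs{F_m(0)}\ge\abs{\phi(0)}^m-1$, so
\begin{equation*}
  \max_{z\in\calS}\abs{p_m(z)} \le \frac{2}{\abs{\phi(0)}^m-1} .
\end{equation*}

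The bound $3/\abs{\phi(0)}^m$ should come from the same construction, with a case distinction. If $\abs{\phi(0)}^m\ge 3$, then $2/(\abs{\phi(0)}^m-1)\le 3/\abs{\phi(0)}^m$, since $2r\le 3r-3$ exactly when $r\ge3$. If $\abs{\phi(0)}^m<3$, then $3/\abs{\phi(0)}^m>1$, and the trivial choice $p_m\equiv 1$ already gives the bound. Taking the minimum of the two estimates and multiplying by $1+\matsqrt{2}$ yields \eqref{eq:poly-bound}.

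The main obstacle is justifying the Faber polynomial estimates for a merely convex $\calS$: the sup-bound $2$ and the exterior estimate $\abs{F_m-\phi^m}\le1$. For these I would cite Eiermann (1989) and Kövari--Pommerenke rather than reprove them. I would also check that Crouzeix's constant is applicable with the Euclidean field of values, as used in the definition of $\mathcal{F}(\generalMatrix)$.
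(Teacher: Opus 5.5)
Your proposal is correct and follows the paper's own proof. Like the paper, you reduce via Crouzeix--Palencia to the scalar problem $\min_{p_m(0)=1}\max_{z\in\calS}\abs{p_m(z)}$ and bound it by the normalized Faber polynomial $F_m/F_m(0)$, using the convex-set estimates $\abs{F_m}\le 2$ on $\calS$ and $\abs{F_m(0)}\ge\abs{\phi(0)}^m-1$ that the paper takes from Eiermann's Theorem~2. The one difference is the alternative bound $3/\abs{\phi(0)}^m$: the paper cites \cite[Eq.~(2.14)]{HocL97} for it, whereas you derive it directly by the elementary case split $\abs{\phi(0)}^m\ge 3$ versus $\abs{\phi(0)}^m<3$, using the constant polynomial in the second case, and that derivation is valid.
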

\begin{proof}
  It was shown in \cite{CroP17} that
  \begin{equation*}
    \norm{\pmGeneral} \leq (1+\matsqrt{2})\max_{z \in \calS} \abs{p_m(z)}.
  \end{equation*}
  The statement then follows from \cite[Eq.\ (2.14)]{HocL97} and
  \cite[Theorem~2]{Eiermann_1989}.
\end{proof}

The conformal map 
$\phi$ can be determined 
numerically by using the Schwarz-Christoffel toolbox \cite{Dri05}.

\subsection{Preconditioning for locally refined grids}\label{sec:prec}
Our aim and the content of this section is the construction of a preconditioner such that the field of values of the preconditioned matrix with respect to the discrete weighted inner product $\dgip{\cdot,\cdot}$ defined in \eqref{def:weighted_ip} is independent of the fine mesh elements. %
Then by \Cref{thm:poly-bound-conf}, the same holds for the error of the
preconditioned Krylov method in this weighted inner product.

Motivated by locally implicit methods for \maxwell in
\cite{HocS16,Ver11},
we suggest to 
precondition $\systemmatrix$ from \eqref{eq:linear_A} with its dominant part, 
\begin{equation}\label{def:preconditioner}
	\systemmatrix 
	\approx 
	\preconditioner
	\da 
	\idmatrix[\dofs[first]]
	-
	\gamma\friedmatrixmodified[second]\friedmatrixmodified[first], 
\end{equation}
where $\gamma>0$ is a suitably chosen parameter.
Note that this basically boils down to replacing the \Friedrichs matrices $\friedmatrix[first], \friedmatrix[second]$ in \eqref{eq:linear_A} defined on the full mesh by the split matrices acting on the implicitly treated mesh elements, cf. \Cref{sec:problem_setting} and by replacing the possibly complex factor $\alpha$ by $\gamma>0$.
By \eqref{eq:split_adjoint} and $\gamma>0$, the preconditioner $\preconditioner$ is symmetric and positive definite with respect to $\dgip{\cdot,\cdot}$, and thus it has a symmetric and positive definite square root $\matsqrt{\preconditioner}$.
This allows us to define an equivalent preconditioned linear system 
\begin{subequations} \label{eq:linear_precon_all}
	\begin{equation}\label{eq:linear_preconditioner}
  		\systemmatrix* \systemsol* = \systemrhs*, 
	\end{equation}
	where 
	\begin{equation}\label{def:preconditioned_matrix_vector}
		\systemmatrix*
		\da
		\invsqrt{\preconditioner} \systemmatrix \invsqrt{\preconditioner},
		\quad
		\systemsol*
		\da
		\matsqrt{\preconditioner} \systemsol
		\andeq
		\systemrhs*
		\da
		\invsqrt{\preconditioner} \systemrhs .
	\end{equation}    
\end{subequations}
Since $\systemmatrix$ is complex symmetric and $\preconditioner$ is real symmetric, the preconditioned matrix $\systemmatrix*$ is again complex symmetric (with respect to $\dgip{\cdot,\cdot}$).

We now apply the QMR method for complex symmetric matrices to the
preconditioned linear system \eqref{eq:linear_precon_all} and
refer to this method as the preconditioned QMR (\PQMR)
method, cf.\
\cite[Alg.~8.1.]{Freund_Nachtigal_1994}.
Since the computation of $\matsqrt{\preconditioner}$ is usually very expensive, it is essential that it is not required for the implementation. In fact, one only has to solve linear systems with $\preconditioner$ but neither compute $\matsqrt{\preconditioner}$ nor $\invsqrt{\preconditioner}$.
Solving linear systems with $\preconditioner$ does not lead to too much
overhead costs because $\friedmatrixmodified[second]\friedmatrixmodified[first]$ only acts on the fine
elements and their direct neighbors and thus is of small
dimension compared to $\systemmatrix$.
Moreover, systems with $\preconditioner$ can also be solved iteratively, e.g., using standard preconditioning techniques for symmetric positive definite matrices.

It remains to show that the error of pQMR
can indeed be bounded
independently of the fine mesh.
Note that \Cref{thm:cs-qmr-conv,thm:poly-bound-conf}.
also hold for  $\norm{\,\cdot\,} = \dgnorm{\cdot}$, if the
Lanczos process and the field of values are defined w.r.t.\ $\ip{\cdot,\cdot} = \dgip{\cdot,\cdot}$.
Using these theorems,
it is sufficient to show that
the field of values $\fov{\systemmatrix*}$ can be bounded independent of the fine mesh.

We split the factor $\alpha$ in \eqref{eq:linear_A} into its real and imaginary part,
\begin{equation} \label{def:alpha}
	\alpha 
	\da 
	\realpha + \irm\imalpha, 
	\quad 
	\realpha, \imalpha \in \real,
\end{equation}
and define
\begin{equation}  \label{def:Grt}
	\Gtez = 1 + \zeta \opnorm*[first][second]{\friedmatrixnonstiff[first]}^2,
	\qquad \Gtiz = 1 + \zeta \opnorm*[first][second]{\friedmatrixmodified[first]}^2,
	\quad \text{for } \zeta\in\complex.                       
\end{equation}
Defining quadrilaterals
\begin{subequations} \label{eq:QR}
\begin{align}
  \Qt &= \textrm{conv}\bigl\{
        1,\;
        \Gtea,
        \tfrac{\alpha}{\gamma},\;
        \tfrac{\alpha}{\gamma} \Grtg
        \bigr\}, \label{eq:Q}\\
  \Rt &= \textrm{conv}\bigl\{
		 1,\;
		\Grtg,\;
        1+\bigl(\tfrac{\alpha}{\gamma} - 1\bigr)
        \bigl(\Grtg - \tfrac{1}{\Git}\bigr),\; 	%
        \Grtg +
        \bigl(\tfrac{\alpha}{\gamma} - 1\bigr)
        \bigl(\Grtg - \tfrac{1}{\Git}\bigr) \bigr\}     \label{eq:R}  ,\;
\end{align}
\end{subequations}
allows us to construct a superset $\calS$ of $\fov{\systemmatrix*}$ which is independent of the fine mesh.
\begin{theorem}\label{thm:fov_bound}
Let $\alpha \neq 0$ satisfy \eqref{property:alpha} and let $\systemmatrix*$ be defined in \eqref{def:preconditioned_matrix_vector} where the preconditioner $\preconditioner$ is given in \eqref{def:preconditioner} for some parameter $\gamma>0$.
Then we have $\fov{\systemmatrix*} \subset \qmrsuperset$, where
\begin{equation*}\label{eq:interval} 
	\qmrsuperset
	=
	\begin{cases}
		\Qt \cap \Rt, & \imalpha \neq 0, \\[2mm]
		[\frac{\alpha}{\gamma}, \Gtea], & \imalpha=0,
		\quad
		0 < \realpha = \alpha \leq \gamma, \\[2mm]
		[1, \frac{\alpha}{\gamma}\Grtg], & \imalpha=0,
		\quad
		0 < \gamma \leq \realpha = \alpha,
	\end{cases} 
\end{equation*}
is independent of the fine mesh and $0\not\in\qmrsuperset$.
\end{theorem}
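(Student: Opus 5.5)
The plan is to compute the weighted Rayleigh quotient of $\systemmatrix*$ explicitly and then show that it always lies in a set described only by $\alpha$, $\gamma$ and the coarse norm $\opnorm*[first][second]{\friedmatrixnonstiff[first]}$.

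\emph{Reduction to two scalar parameters.} For $\bv\neq\zero$ put $\bu=\invsqrt{\preconditioner}\bv$. Since $\invsqrt{\preconditioner}$ is self-adjoint with respect to $\dgip{\cdot,\cdot}$,
\[
  \rho_{\systemmatrix*}(\bv)=\frac{\dgip{\systemmatrix\bu,\bu}}{\dgip{\preconditioner\bu,\bu}} .
\]
The adjointness relations \eqref{eq:full_adjoint} and \eqref{eq:split_adjoint} give $-\dgip{\friedmatrix[second]\friedmatrix[first]\bu,\bu}=\dgnorm{\friedmatrix[first]\bu}^2$, and the same identity holds for the fine split matrices. The splitting \eqref{eq:split_norm} then yields, after normalizing $\dgnorm{\bu}=1$,
\[
  \rho=\frac{1+\alpha(a+b)}{1+\gamma b},\qquad a=\dgnorm{\friedmatrixnonstiff[first]\bu}^2\in\bigl[0,\opnorm*[first][second]{\friedmatrixnonstiff[first]}^2\bigr],\quad b=\dgnorm{\friedmatrixmodified[first]\bu}^2\in\bigl[0,\opnorm*[first][second]{\friedmatrixmodified[first]}^2\bigr].
\]
Everything afterwards is elementary algebra on this expression. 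The key point is to reparametrize so that the possibly huge quantity $b$ enters only through a variable confined to a fixed bounded interval.

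\emph{Two representations.} First, with $t=\gamma b/(1+\gamma b)\in[0,1)$ we have $1/(1+\gamma b)=1-t$, and hence
\[
  \rho=(1-t)(1+\alpha a)+t\,\tfrac{\alpha}{\gamma}.
\]
This is a convex combination of a point of the segment $[1,\Gtea]$ and the point $\alpha/\gamma$, so $\rho\in\Qt$ (in fact already in $\mathrm{conv}\{1,\Gtea,\alpha/\gamma\}$).

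Second, with $s=\gamma a/(1+\gamma b)\in[0,\Grtg-1]$ and $w=\gamma(a+b)/(1+\gamma b)$ one checks that
\[
  \rho=1+s+\bigl(\tfrac{\alpha}{\gamma}-1\bigr)w .
\]
Since $\gamma b/(1+\gamma b)\le 1-1/\Git$, we get $0\le w\le \Grtg-1/\Git$. Now $\rho$ is affine in $(s,w)$ and $(s,w)$ ranges over a rectangle, so $\rho$ lies in the image parallelogram, which is exactly $\Rt$. Thus $\fov{\systemmatrix*}\subset\Qt\cap\Rt$. The set $\Qt$ involves only coarse quantities. In $\Rt$ the fine norm enters only through $1/\Git\in(0,1]$, so $\Rt$ is contained in a fixed parallelogram that does not depend on the fine mesh. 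I expect the main obstacle to be this bookkeeping: finding the substitution that turns $\rho$ into an affine function of parameters ranging over fine-mesh-independent sets. The identification of the vertices of $\Rt$ indicates the right choice of $(s,w)$.

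\emph{Real case and $0\notin\qmrsuperset$.} If $\imalpha=0$, then $\rho$ is real and $\alpha>0$ by \eqref{property:alpha} and $\alpha\neq0$.
\begin{itemize}
  \item For $\alpha\le\gamma$, the first representation shows that $\rho$ is a convex combination of a number in $[1,\Gtea]$ and $\alpha/\gamma\le1$, so $\rho\in[\alpha/\gamma,\Gtea]$.
  \item For $\alpha\ge\gamma$, the second representation has nonnegative coefficients, so $\rho\ge1$. Moreover $\rho\le\frac{\alpha}{\gamma}\cdot\frac{1+\gamma(a+b)}{1+\gamma b}\le\frac{\alpha}{\gamma}(1+\gamma a)\le\frac{\alpha}{\gamma}\Grtg$.
\end{itemize}
In both real cases $0$ is excluded because $\alpha/\gamma>0$. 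For $\imalpha\neq0$ we write $\Qt=[1,\alpha/\gamma]+\alpha\,[0,\opnorm*[first][second]{\friedmatrixnonstiff[first]}^2]$. Then $0\in\Qt$ would require $-\alpha c=(1-\theta)+\theta\alpha/\gamma$ for some $c\ge0$ and $\theta\in[0,1]$, that is, $\alpha(c+\theta/\gamma)=-(1-\theta)$. Taking imaginary parts with $\imalpha\neq0$ forces $c+\theta/\gamma=0$, hence $c=\theta=0$, and then the equation reads $0=-1$. This is impossible, so $0\notin\Qt\supset\Qt\cap\Rt$.
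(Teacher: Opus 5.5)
Your proof is correct and follows essentially the paper's route. You use the same formula $\rho=\bigl(1+\alpha(a+b)\bigr)/(1+\gamma b)$, your $s$ and $w$ are exactly the quantities bounded in \eqref{eq:bound1} and \eqref{eq:bound3}, and you treat the real case the same way. The differences are refinements rather than a new idea: you describe $\Qt$ and $\Rt$ as affine images of rectangles instead of via pairs of linear inequalities, which makes the paper's ``simple calculation'' explicit. Your first representation gives the sharper triangle $\mathrm{conv}\{1,\Gtea,\tfrac{\alpha}{\gamma}\}\subset\Qt$, and you actually prove $0\notin\qmrsuperset$, which the paper only asserts.
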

\begin{proof}
Let $\bu \in \complexvec{\dofs[first]}, \bu\neq \zero $ and $\but\da\matsqrt{\preconditioner} \bu$. 
Then, by the symmetry of $\preconditioner$ (and thus of $\matsqrt{\preconditioner}$), the adjointness and split properties \cref{eq:full_adjoint,eq:split_adjoint,eq:split_norm}, we have
\begin{subequations} \label{eq:rhovAt}
\begin{align}
    \dgip{\but, \systemmatrix* \but} 
	&= 
	\dgip{\bu, \systemmatrix \bu}
    =  
	\dgnorm{\bu}^2 
	+ 
	(\realpha + \irm\imalpha) \bigl(
		\dgnorm{\friedmatrixnonstiff[first]\bu}^2
		+ 
		\dgnorm{\friedmatrixmodified[first]\bu}^2
	\bigr), \\
    \dgip{\but, \but} 
	&= 
	\dgip{\bu, \preconditioner\bu}
    =  
	\dgnorm{\bu}^2 
	+ 
	\gamma \dgnorm{\friedmatrixmodified[first]\bu}^2.
\end{align}
\end{subequations}

We now distinguish the two cases of $\alpha$ being real or complex.

(a) For $\imalpha \neq 0$, it is easy to see that
\begin{subequations}  \label{eq:bound1-2}
	\begin{equation}\label{eq:bound1}
	1 
	\leq 
	\re[\rhoepsAt(\but)]
	+ 
	\frac{\gamma - \realpha}{\imalpha} \im{\rhoepsAt(\but)} 
	= 
	1 
	+ 
	\frac{
		\gamma\dgnorm{\friedmatrixnonstiff[first]\bu}^2
	}{
		\dgnorm{\bu}^2 
		+ 
		\gamma \dgnorm{\friedmatrixmodified[first]\bu}^2
	}
	\leq \Grtg.
	\end{equation}
	The first inequality is obvious and the second follows from the definition of the weighted matrix norm in \eqref{eq:normweight} and $\gamma > 0$.
	In addition, we have
	\begin{equation}\label{eq:bound2}
		0 
		\leq 
		\re[\rhoepsAt(\but)]
		- 
		\frac{\realpha}{\imalpha}\im{\rhoepsAt(\but)}
		= 
		\frac{
			\dgnorm{\bu}^2
		}{
			\dgnorm{\bu}^2
			+ 
			\gamma \dgnorm{\friedmatrixmodified[first]\bu}^2
		}
		\leq 1.
	\end{equation}
\end{subequations}
A simple calculation shows that the inequalities \eqref{eq:bound1-2} are satisfied if and only if $\rhoepsAt\in\Qt$ with $\Qt$ defined in \eqref{eq:Q}.

Next we consider only the imaginary part. Using \eqref{eq:rhovAt}, and \eqref{def:Grt} we obtain
\begin{equation}\label{eq:bound3}
	0 \leq \frac{\gamma}{\imalpha} \im{\rhoepsAt(\but)}
	=
	1
	+
	\frac{
		\gamma\dgnorm{\friedmatrixnonstiff[first]\bu}^2
	}{
		\dgnorm{\bu}^2
		+
		\gamma\dgnorm{\friedmatrixmodified[first]\bu}^2
	}
	-
	\frac{
		\dgnorm{\bu}^2
	}{
		\dgnorm{\bu}^2
		+
		\gamma\dgnorm{\friedmatrixmodified[first]\bu}^2
	}
	\leq 
	\Grtg - \frac{1}{\Git}.
\end{equation}
The bounds \eqref{eq:bound1} and \eqref{eq:bound3} are satisfied if and only if $\rhoepsAt\in\Rt$ with $\Rt$ defined in \eqref{eq:R}.
Hence we proved $\fov{\systemmatrix*}\subset \Qt\cap \Rt$.

(b) For $\imalpha=0$, the matrix $\systemmatrix*\in \realmat{\dofs[first]}{\dofs[first]}$ is symmetric
	and thus $\rhoepsAt (\but) \in \real$ for all $\but \in \complexvec{\dofs[first]}$. Since 
	$\alpha = \realpha$ we have
\begin{equation}
	{\rhoepsAt(\but)} 
	=
	\frac{\alpha}{\gamma} 
	+
	\frac{
		(1 - \frac{\alpha}{\gamma})\dgnorm{\bu}^2
		+
		\alpha \dgnorm{\friedmatrixnonstiff[first]\bu}^2
	}{
		\dgnorm{\bu}^2
		+
		\gamma \dgnorm{\friedmatrixmodified[first]\bu}^2
	}. \label{eq:1}
\end{equation}
If $\alpha \geq \gamma$, \eqref{eq:1} can be bounded by
\begin{equation*}
	1 
	=
	\frac{\alpha}{\gamma} 
	+
	\frac{
		(1 - \frac{\alpha}{\gamma})\dgnorm{\bu}^2
	}{
		\dgnorm{\bu}^2
	}
	\leq 
	{\rhoepsAt(\but)}
	\leq
	\frac{\alpha}{\gamma} 
	+
	\frac{
		\alpha \dgnorm{\friedmatrixnonstiff[first]\bu}^2
	}{
		\dgnorm{\bu}^2 
	} 
	\leq 
	\frac{\alpha}{\gamma} \Grtg,
\end{equation*}	
Similarly, for $0 < \alpha \leq \gamma$, it is straightforward to see
\begin{equation*}
	\frac{\alpha}{\gamma}
	\leq 
	{\rhoepsAt(\but)}
	\leq 
	\frac{\alpha}{\gamma} 
	+
	\frac{
		(1-\frac{\alpha}{\gamma})\dgnorm{\bu}^2 
		+
		\alpha \dgnorm{\friedmatrixnonstiff[first]\bu}^2
	}{
		\dgnorm{\bu}^2
	} 
	\leq \Gtea.
\end{equation*}
Furthermore, since
\begin{equation*}
	0 \leq \Grtg - \frac{1}{\Git} \leq \Grtg, \qquad \gamma > 0,
\end{equation*}	
all quantities defining the superset
$\qmrsuperset$ are bounded independently of the implicitly
treated mesh elements and thus, $\qmrsuperset$ is
independent of $\hf$.
Finally, in all cases we have $0\notin \qmrsuperset$.
\end{proof}

Further, we point out that 
$\gamma > 0$ can be chosen freely and thus used to improve
the convergence factor. For example, a natural choice would be
\begin{equation}\label{eq:suggest}
 	\gamma = \abs{\realpha}\,\, 
	\text{if}\,\, 
	\realpha \neq 0 
	\quad
 	\text{or}
	\quad 
	\gamma = \abs{\alpha} \text{ else}.
\end{equation}
In any case, one should choose $\gamma\sim\tausq$ so that the dominating part of $\systemmatrix$ is well approximated by the preconditioner $\preconditioner$.

As a special case of \Cref{thm:fov_bound}, we obtain an inclusion set for the field of values of $\systemmatrix$ itself. 
Hence, we can state an error bound for the complex symmetric QMR method without preconditioning. 

\begin{corollary} \label{cor:fov-A} 
	For the matrix $\systemmatrix$ defined in \eqref{eq:linear_A}, Theorem~$\ref{thm:fov_bound}$ holds by substituting $\friedmatrixnonstiff[first] = \friedmatrix[first]$ and $\friedmatrixmodified[first] = 0$ in \eqref{eq:QR}.
\end{corollary}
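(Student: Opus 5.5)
The plan is to observe that the unpreconditioned matrix $\systemmatrix$ arises from the preconditioned one by choosing a trivial splitting: all mesh elements are treated as coarse and none as fine. Concretely, set $\friedmatrixnonstiff[first] = \friedmatrix[first]$, $\friedmatrixnonstiff[second]=\friedmatrix[second]$ and $\friedmatrixmodified[first]=0$, $\friedmatrixmodified[second]=0$.

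The first step is to check that this trivial splitting satisfies every structural property used in the proof of \Cref{thm:fov_bound}.
\begin{itemize}
\item The splitting \eqref{eq:split_def} holds trivially.
\item The adjointness \eqref{eq:split_adjoint} reduces to \eqref{eq:full_adjoint} for the coarse part and is vacuous for the zero part.
\item The product identities \eqref{eq:split_product} become $\friedmatrix[second]\friedmatrix[first]=\friedmatrix[second]\friedmatrix[first]$ and $0=0$.
\item The norm identity \eqref{eq:split_norm} reads $\dgnorm{\friedmatrix[first]\bu}^2=\dgnorm{\friedmatrix[first]\bu}^2+0$.
\end{itemize}
The mesh-independence bound on $\friedmatrixnonstiff[first]$ is not needed for the inclusion itself; it only enters the remark that $\qmrsuperset$ is independent of $\hf$, which we drop here.

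Next, with $\friedmatrixmodified[first]=\friedmatrixmodified[second]=0$ the preconditioner \eqref{def:preconditioner} becomes $\preconditioner=\idmatrix[\dofs[first]]$ for every $\gamma>0$. Hence $\matsqrt{\preconditioner}=\idmatrix$, $\systemmatrix*=\systemmatrix$ and $\but=\bu$. The two identities \eqref{eq:rhovAt} then hold verbatim: $\dgip{\bu,\systemmatrix\bu}=\dgnorm{\bu}^2+\alpha\dgnorm{\friedmatrix[first]\bu}^2$ and $\dgip{\bu,\bu}=\dgnorm{\bu}^2$. Every subsequent inequality in the proof of \Cref{thm:fov_bound}, namely \eqref{eq:bound1}, \eqref{eq:bound2}, \eqref{eq:bound3} and the real case, uses only these identities, $\gamma>0$ and the definitions \eqref{def:Grt}. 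The proof therefore goes through unchanged and gives $\fov{\systemmatrix}\subset\qmrsuperset$.

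In the resulting set we have $\Git=1$ and $\Grtg=1+\gamma\opnorm*[first][second]{\friedmatrix[first]}^2$. The real case $\realpha=\alpha\ge\gamma$ of the lower bound also uses $\friedmatrixmodified[first]=0$ consistently. Finally, $0\notin\qmrsuperset$ follows exactly as before.

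The only point needing care is one of bookkeeping rather than mathematics. The statement ``independent of the fine mesh'' from \Cref{thm:fov_bound} does not carry over, since $\opnorm*[first][second]{\friedmatrix[first]}$ depends on $\hf$. I would therefore phrase the corollary as the inclusion $\fov{\systemmatrix}\subset\qmrsuperset$ together with $0\notin\qmrsuperset$. I would also verify that nothing in the argument required $\friedmatrixmodified[first]\neq0$ or a nontrivial fine submesh; for instance, the inequality $0\le\Grtg-1/\Git$ still holds with $\Git=1$.
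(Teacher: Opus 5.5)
Your proposal is correct and matches the paper: the paper gives no separate proof and simply calls the corollary a special case of Theorem~\ref{thm:fov_bound}. With the trivial splitting the preconditioner reduces to $\mathbf{B}=\mathbf{I}$ for every $\gamma>0$, so the theorem's proof runs verbatim with $\Gamma^i_\gamma=1$, and your caveat that only the inclusion and $0\notin\mathcal{S}$ survive (not fine-mesh independence) agrees with the paper's remark that without preconditioning the superset scales with $h_{\min}^{-1}$.
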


Recall that by an inverse estimate \cite[Lemma~1.44]{DiPE12} there is a constant $c$ independent of the mesh width such that $\opnorm*[first][second]{\friedmatrix[first]} \leq c \minmeshdiam[-1]$. 
Hence, without preconditioning, the superset will scale with $\minmeshdiam[-1]$.
Applying \Cref{thm:cs-qmr-conv,thm:poly-bound-conf} to the preconditioned system \eqref{eq:linear_preconditioner} provides the following error bound:
\begin{theorem}\label{th:err_PQMR}
	Let $\qmrsol[m]$ be the QMR approximation to the solution of the preconditioned linear system \eqref{eq:linear_precon_all}.
	If \eqref{eq:Dinv} is satisfied, then there is a constant $\phi_0>1$ independent of the fine mesh such that the error of the $m$th \PQMR iterate satisfies
	\begin{equation} \label{eq:error_PQMR}
		\dgnorm{\inv{\systemmatrix*}\systemrhs* - \qmrsol[m]} 
		\leq 
		\cAt(1 + \sqrt{2}) \bigl(1+(m+1)\delta\bigr)
		\min\left\{\frac{3}{\phi_0^m}, \frac{2}{\phi_0^m-1}\right\},
	\end{equation}
	where 
	\begin{equation} \label{eq:cAt}
		\begin{cases}
			\cAt = 1, & 
			\quad\text{if}\quad 
			0 < \gamma \leq \realpha,\  \imalpha = 0 
			\quad\text{or}\quad 
			\imalpha \neq 0, \\[1mm]
			\cAt = \frac{\gamma}{\realpha}, &
			\quad\text{if}\quad 
			0 < \realpha \leq \gamma,\ \imalpha = 0.
		\end{cases} 
		\end{equation}
    \end{theorem}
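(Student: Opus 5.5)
The plan is to combine \Cref{thm:cs-qmr-conv} and \Cref{thm:poly-bound-conf}, both applied to the preconditioned matrix $\systemmatrix*$ in the weighted inner product $\dgip{\cdot,\cdot}$, with the inclusion set $\qmrsuperset$ of \Cref{thm:fov_bound}. Throughout, I take the initial guess to be $\qmrsol[0]=\zero$ and normalize the right-hand side so that $\dgnorm{\systemrhs*}=1$; equivalently, \eqref{eq:error_PQMR} is read as a relative error bound. Then $\dgnorm{\brz}=1$. As remarked before \Cref{thm:fov_bound}, the Lanczos process, the field of values and both theorems carry over verbatim to $\dgip{\cdot,\cdot}$, and $\systemmatrix*$ is complex symmetric there.

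\textbf{Step 1.} By \Cref{thm:cs-qmr-conv} with $\generalMatrix=\systemmatrix*$,
\begin{equation*}
\dgnorm{\inv{\systemmatrix*}\systemrhs*-\qmrsol[m]}\le \dgnorm{\inv{\systemmatrix*}}\,\bigl(1+(m+1)\delta\bigr)\minpm\dgnorm{p_m(\systemmatrix*)}.
\end{equation*}
Here \eqref{eq:normPm} is used, assuming the Lanczos vectors are normalized, which is the standard choice.

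\textbf{Step 2.} \Cref{thm:fov_bound} gives a convex, bounded set $\qmrsuperset\supset\fov{\systemmatrix*}$ with $0\notin\qmrsuperset$. This set is a quadrilateral intersection or a segment, and it is independent of the fine mesh. Let $\phi$ be the exterior conformal map of $\qmrsuperset$ and set $\phi_0\da\abs{\phi(0)}$. Since $0$ lies in the exterior of $\qmrsuperset$, we have $\phi_0>1$. Since $\phi$ depends only on $\qmrsuperset$, so does $\phi_0$. If $\qmrsuperset$ is a segment, I would apply \Cref{thm:poly-bound-conf} to a thin convex neighbourhood of it, or use the interval version directly. \Cref{thm:poly-bound-conf} then yields the factor $(1+\sqrt2)\min\{3\phi_0^{-m},2/(\phi_0^m-1)\}$.

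\textbf{Step 3.} It remains to show $\dgnorm{\inv{\systemmatrix*}}\le\cAt$. I would bound it by the distance of the field of values to the origin. For $\but\neq\zero$, Cauchy--Schwarz gives
\begin{equation*}
\dgnorm{\systemmatrix*\but}\,\dgnorm{\but}\ge\abs{\dgip{\but,\systemmatrix*\but}}\ge \operatorname{dist}(0,\qmrsuperset)\,\dgnorm{\but}^2,
\end{equation*}
hence $\dgnorm{\inv{\systemmatrix*}}\le 1/\operatorname{dist}(0,\qmrsuperset)$. In the real cases this is immediate: $\qmrsuperset=[1,\tfrac{\alpha}{\gamma}\Grtg]$ gives distance $1$, and $\qmrsuperset=[\tfrac{\alpha}{\gamma},\Gtea]$ gives distance $\alpha/\gamma$, so the bound is $\gamma/\realpha$.

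\textbf{Main obstacle.} The complex case $\imalpha\neq0$ is where I expect the difficulty. One needs $\abs{\rhoepsAt(\but)}\ge1$ on $\Qt\cap\Rt$, and the supporting half-plane from \eqref{eq:bound1} only gives distance $\bigl(1+((\gamma-\realpha)/\imalpha)^2\bigr)^{-1/2}$. This equals $1$ exactly for the recommended choice $\gamma=\realpha$ in \eqref{eq:suggest}, where \eqref{eq:bound1} reads $\re\rhoepsAt\ge1$.

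For general $\gamma$, I would first try to combine \eqref{eq:bound1}, \eqref{eq:bound2} and \eqref{eq:bound3} directly on the explicit Rayleigh quotient from \eqref{eq:rhovAt}. Write $a=\dgnorm{\bu}^2$, $b=\dgnorm{\friedmatrixnonstiff[first]\bu}^2$ and $c=\dgnorm{\friedmatrixmodified[first]\bu}^2$, so that
\begin{equation*}
\rhoepsAt=\frac{a+\alpha(b+c)}{a+\gamma c}.
\end{equation*}
The goal is to check when $\abs{a+\alpha(b+c)}\ge a+\gamma c$. If this fails, I would replace $1$ by the actual value $\operatorname{dist}(0,\Qt\cap\Rt)^{-1}$ in \eqref{eq:cAt}. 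That quantity is still independent of the fine mesh, so the qualitative statement would be unaffected.
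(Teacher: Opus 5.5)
Your Steps 1--3 follow the paper's proof. The paper also combines Theorems~\ref{thm:cs-qmr-conv} and~\ref{thm:poly-bound-conf} in the weighted norm with the set $\mathcal{S}$ of Theorem~\ref{thm:fov_bound}. It bounds $\|\At^{-1}\|$ by setting $\mathbf{u}=\At^{-1}\mathbf{w}$ and combining $1/\cAt\le\operatorname{Re}\rhoepsAt(\mathbf{u})$ with Cauchy--Schwarz, which is the half-plane special case of your distance argument. Your normalization $\|\mathbf{r}_0\|=1$ is genuinely needed: the paper's proof uses $\|p_m(\At)\mathbf{r}_0\|\le\|p_m(\At)\|\,\|\mathbf{r}_0\|$, and the factor $\|\mathbf{r}_0\|$ is missing from \eqref{eq:error_PQMR}. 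The detour for segments is unnecessary, since a segment is convex and bounded and Theorem~\ref{thm:poly-bound-conf} applies to it directly.

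The case you leave open, $\imalpha\neq0$ with $\gamma\neq\realpha$, is exactly where the statement claims too much.

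\textbf{The case $0<\gamma\le\realpha$.} This closes in one line without the half-plane. With your $a,b,c$ we have $\operatorname{Re}\rhoepsAt=(a+\realpha(b+c))/(a+\gamma c)\ge1$, because $\realpha(b+c)\ge\gamma c$. Hence $|\rhoepsAt|\ge1$ and $\cAt=1$. This is precisely the inequality the paper's proof invokes.

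\textbf{The case $\gamma>\realpha$.} Here that inequality is false. Take any $\mathbf{u}\neq\mathbf{0}$ with $\mathbf{L}_u^{c}\mathbf{u}=\mathbf{0}\neq\mathbf{L}_u^{f}\mathbf{u}$, for instance supported in the interior of the fine region. Then $b=0$, so $\operatorname{Re}\rhoepsAt=(a+\realpha c)/(a+\gamma c)<1$, and even $|\rhoepsAt|<1$ when $c/a$ is small. Suppose such a $\mathbf{u}$ is also an eigenvector of $-\mathbf{L}_v\mathbf{L}_u$ with eigenvalue $s>0$. By \eqref{eq:split_product} it is then an eigenvector of $-\mathbf{L}_v^{f}\mathbf{L}_u^{f}$ with the same eigenvalue. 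So $(1+\alpha s)/(1+\gamma s)$ is an eigenvalue of $\At$ (which is similar to $\mathbf{B}^{-1}\mathbf{A}$), of modulus $<1$ for small $s$, and therefore $\|\At^{-1}\|>1$.

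So $\cAt=1$ is not justified for $\gamma>\realpha$, neither by your argument nor by the paper's, whose step $1\le\operatorname{Re}\rhoepsAt$ fails there. This regime includes the recommended $\gamma=|\realpha|$ from \eqref{eq:suggest} whenever $\realpha<0$. That happens, for example, for one conjugate pair of eigenvalues of the four-stage Gau{\ss} method.

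\textbf{The repair.} Your fallback is the right fix, but take the constant from \eqref{eq:bound1} rather than $\operatorname{dist}(0,Q\cap R)^{-1}$, which still depends on $h_f$ through $\Git$. With $k=(\gamma-\realpha)/\imalpha$, \eqref{eq:bound1} gives $\operatorname{Re}\rhoepsAt+k\operatorname{Im}\rhoepsAt\ge1$, hence $\|\At^{-1}\|\le\sqrt{1+k^2}$. This holds for every $\gamma>0$ and every sign of $\realpha$, does not depend on the fine mesh, and equals $1$ at $\gamma=\realpha$, the choice used in the experiments. For $\realpha>0$ the real part also gives $\gamma/\realpha$, as in the real case, and you may take the smaller of the two.

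For the same reason, compute $\phi_0$ from $Q$ rather than from $Q\cap R$. $Q$ is independent of $h_f$, and $0\notin Q$.

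With $\cAt=1$ for $\imalpha\neq0$, $0<\gamma\le\realpha$, and $\cAt=\sqrt{1+k^2}$ otherwise, your argument proves a correct version of the theorem.
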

\begin{proof}
Since $\systemmatrix*$ is complex symmetric, we apply \Cref{thm:cs-qmr-conv} for $\generalMatrix = \systemmatrix*$ with $\norm{\,\cdot\,} = \dgnorm{\cdot}$.
By \Cref{thm:fov_bound}, $\fov{\systemmatrix*}\subset \qmrsuperset$ is independent of the fine mesh and the same holds for the conformal map $\phi$ used in \Cref{thm:poly-bound-conf}, in particular for $\abs{\phi(0)} =: \phi_0$.
Thus, the bound \eqref{eq:error_PQMR} follows from \eqref{eq:err_General}, \eqref{eq:normPm}, and \eqref{eq:poly-bound}, if we can show
\begin{equation}\label{eq:normAtinv}
    \dgnorm{\systemmatrix*^{-1} \bw} 
	\leq 
	\cAt \dgnorm{\bw}
    \qquad\text{for all}\quad 
	\bw \in \complexvec{\dofs[first]}.
\end{equation}
We choose an arbitrary $\bw \in \complexvec{\dofs[first]}$, $\bw \neq \zero$  and define $\bu = \inv{\systemmatrix*}\bw$. Then, \Cref{thm:fov_bound} with $\cAt$ defined in \eqref{eq:cAt} and the Cauchy-Schwarz inequality yield
\begin{equation}\label{eq:invAt}
	\frac{1}{\cAt} 
	\leq 
	\re[\rho_{\systemmatrix*}(\bu)]
	=
	\re\frac{
		\dgip{\bw,\inv{\systemmatrix*}\bw}
	}{
		\dgnorm{\inv{\systemmatrix*}\bw}^2
	}
	\leq	
	\frac{
		\dgnorm{\bw}\dgnorm{\inv{\systemmatrix*}\bw}
	}{
		\dgnorm{\inv{\systemmatrix*}\bw}^2
	}
	=
	\frac{
		\dgnorm{\bw}
	}{
		\dgnorm{\inv{\systemmatrix*}\bw}
	}.
\end{equation}
This proves \eqref{eq:normAtinv}.
\end{proof}
As an immediate consequence of \Cref{th:err_PQMR} we see that the error of the \PQMR method is bounded independently of the fine mesh, since $\qmrsuperset$ only depends on the coarse mesh. 
In particular, the number of iterations is uniformly bounded with respect to further refinement of the fine part of the mesh.

\section{Numerical experiments}\label{sec:numerical_exp}
For all experiments, we consider the transverse-electric (TE) polarization of linear \maxwell in a homogeneous medium $\Omega=(0,1)^2$ with $\mu=\epsilon=1$, i.e.,
\begin{subequations}\label{eq:TE-system}
    \begin{align}
        \derv{t} \exE_x &= \partial_y \exH_z - \exJ_x, && \domain\times(0,T), \\
        \derv{t} \exE_y &= -\partial_x \exH_z - \exJ_y, && \domain\times(0,T), \\
        \derv{t} \exH_z &= \partial_y \exE_x - \partial_x \exE_y, && \domain\times(0,T), \\
        \exE(0) &= \exE^0, \quad \exH(0) = \exH^0, && \domain,
        \\
        \exE \times \normal{} &= 0, && \partial\domain\times(0,T) .
    \end{align}
\end{subequations}
The codes to reproduce our results are available at
\begin{center}
    \url{https://github.com/MalikScheifinger/HigherOrderLTI}
\end{center}
The software is based on the FEM library \href{https://www.dealii.org}{\texttt{deal.II}}~\cite{dealII25} version 9.7 and the Maxwell toolbox \href{https://gitlab.kit.edu/kit/ianm/ag-numerik/projects/dg-maxwell/timaxdg}{\texttt{TiMaxdG}}~\cite{TiMaxdG}.

\subsection{Illustration of Theorems~\ref{thm:fov_bound} and~\ref{th:err_PQMR}}

We consider a dG discretization with polynomials of degree two and fix
\begin{equation}\label{eq:choice_alpha_gamma}
	\alpha = \Bigl(\tfrac{1}{24} + \irm\tfrac{\sqrt{3}}{24}\Bigr)\tausq,
	\qquad
	\gamma = \realpha = \tfrac{1}{24} \tausq,
\end{equation}
for the system matrix $\systemmatrix$ in \eqref{eq:linear_A} and the preconditioner $\preconditioner$ in \eqref{def:preconditioner}, respectively.
This choice follows from $\alpha_i = \tausq\rkmatrixeig{i}^2$ in \eqref{eq:schur}, with $\rkmatrixeig{i}$ one of the two complex conjugate eigenvalues of the RK matrix $\rkmatrix$ of the two-stage implicit Gau{\ss}--Legendre method. The parameter $\gamma = \realpha$ is the choice suggested in \eqref{eq:suggest}. The spatial domain is discretized by a uniform coarse mesh which is then locally refined at the center, yielding a coarse mesh width \(\meshdiamCoarse = 0.117851\) and fine mesh widths \(\meshdiamFine[(2)] = 0.0294628\), \(\meshdiamFine[(3)] = 0.0147314\), and \(\meshdiamFine[(4)] = 0.0073657\), where the superscript denotes the local refinement level. Local refinement level three is illustrated in \Cref{fig:mesh}.

\begin{figure}[!htb]
    \centering
    \input{tikz/dg2_globalref2_localref3.tex}
    \caption{Mesh with local refinement level three at the center of the domain.}
    \label{fig:mesh}
\end{figure}

\Cref{fig:fov} shows the spectrum of \(\systemmatrix\) from \eqref{eq:linear_A}, the field of values of the preconditioned matrix \(\systemmatrix*\) from \eqref{def:preconditioned_matrix_vector}, and the quadrilateral bounds \(\Qt\) and \(\Rt\) from \eqref{eq:Q} and \eqref{eq:R}, for time step size \(\ts = 0.05\) and local refinement level three.
By the inverse estimate used in the proof of \Cref{cor:fov-A}, the eigenvalues of \(\systemmatrix\) scale like \(\minmeshdiam[-1]\). The spectrum of the unpreconditioned system is therefore governed by the fine elements and spreads out as the mesh is refined.

Next, we apply the QMR method to the system \eqref{eq:linear_A}, with and without the preconditioner~\eqref{def:preconditioner}. 
To obtain an algebraic test problem with known solution, we choose a random vector as the exact solution \(\systemsol\) of the linear system \eqref{eq:linear_A} and define the right-hand side \(\systemrhs\) by applying the matrix \(\systemmatrix\) to it.
This allows us to compare the relative \(\Ltwo\)-error of the QMR iterates directly. \Cref{fig:QMRvsPQMR_tau0p05} shows the results.
For a fixed tolerance, the number of preconditioned QMR iterations is almost independent of the local refinement level, whereas the number of QMR iterations grows with it, as predicted by \Cref{th:err_PQMR} together with \Cref{cor:fov-A}.

The bound of \Cref{thm:fov_bound} is fine-mesh-independent but not tight when \(\ts\) is small relative to the fine mesh width, where the linear system is only mildly stiff.
In this regime the fine-mesh dependence of the superset enters solely through the term \(\Grtg - 1/\Git\) of \(\Rt\) in \eqref{eq:R}: the factor \(1/\Git\) with \(\Git\) from \eqref{def:Grt} is not negligible at on the coarsest level of refinement and vanishes only as the fine mesh is refined, so \(\Rt\) grows toward its level-independent limit.
\Cref{fig:QMRvsPQMR_tau0p01} shows this effect for \(\tau=0.01\): preconditioned QMR still improves the convergence, and the iteration count increases with the local refinement level while remaining bounded by the fine-mesh-independent superset $\calS$ of \Cref{thm:fov_bound}.
\Cref{fig:fov_tau0p01} illustrates the same dependence geometrically, where \(\Qt\) is identical for all levels and only \(\Rt\) grows.
Such small time steps are not used in practice for the coarse spatial discretizations considered here: once the time discretization error drops below the spatial discretization error, decreasing \(\ts\) further does not reduce the total error.

\begin{figure}[!htb]
    \centering
	\input{tikz/fov_bounds.tex}
    \caption{Spectrum \(\sigma(\systemmatrix)\) (orange), field of values \(\mathcal{F}(\systemmatrix*)\) of the preconditioned matrix (blue, dashed), and quadrilateral bounds \(\Qt\) (teal) and \(\Rt\) (magenta) with \(\alpha\) chosen as in \eqref{eq:choice_alpha_gamma} and \(\tau = 0.05\), local refinement level three. The inset shows a zoom near the origin.}
    \label{fig:fov}
\end{figure}

\begin{figure}[!htb]
    \centering
	\input{tikz/QMRvsPQMR_tau0p05.tex}
    \caption{Relative \(L^2\) error versus iteration count for QMR (solid) and precond. QMR (dashed) applied to \eqref{eq:linear_A} with \(\alpha\) chosen as in \eqref{eq:choice_alpha_gamma} and \(\tau = 0.05\). Colors denote local refinement levels: 2 (green), 3 (blue), 4 (red).}
    \label{fig:QMRvsPQMR_tau0p05}
\end{figure}

\begin{figure}[!htb]
    \centering
	\input{tikz/QMRvsPQMR_tau0p01.tex}
    \caption{Relative \(L^2\) error versus iteration count for QMR (solid) and precond. QMR (dashed) applied to \eqref{eq:linear_A} with \(\alpha\) chosen as in \eqref{eq:choice_alpha_gamma} and \(\tau = 0.01\). Colors denote local refinement levels: 1 (purple), 2 (green), 3 (blue), 4 (red), 5 (orange).}
    \label{fig:QMRvsPQMR_tau0p01}
\end{figure}

\begin{figure}[!htb]
    \centering
	\input{tikz/quadrilaterals_precond_levels_tau0p01.tex}
    \caption{Quadrilateral bounds \(\Qt\) (blue, dashed, identical for all levels) and \(\Rt_k\) (solid) for local refinement levels \(k=1,\ldots,5\) (red, orange, green, purple, brown) for preconditioned system~\eqref{def:preconditioned_matrix_vector} with \eqref{eq:choice_alpha_gamma} and \(\tau = 0.01\).}
    \label{fig:fov_tau0p01}
\end{figure}

\subsection{Runtime comparison}

In this section, we compare the methods with respect to runtime and time integration accuracy.
We use the same TE model problem, but now choose a manufactured solution which is a polynomial of degree seven in each variable.
The spatial discretization uses dG polynomials of degree four.
Thus, the exact solution is not represented exactly by the spatial discretization, and the measured error contains the full discretization error. To further reduce the effect of superconvergence, we randomly perturb the mesh.
The locally refined mesh has
\(\meshdiamFine = 0.00184142\), \(\meshdiamCoarse = 0.0147314\), and \(\dofs = 693\,900\).
The \(\Ltwo\)-error is evaluated at the final time \(T=1\).

\Cref{fig:timings} compares Gau{\ss} RK methods with two (purple) and three (orange) stages, with (dashed) and without (solid) preconditioner \eqref{def:preconditioner}, the locally implicit method~\cite{HocS16} (blue), a local time-stepping method~\cite{Hochbruck_Scheifinger_2026} (green), and the popular low-storage Runge--Kutta method LSRK(12,4) \cite{Busch_Diehl_Niegemann_2021} (red). The local time-stepping method uses a Chebyshev-type iteration on the local system instead of solving a linear system like the locally implicit method, thus resulting in a fully explicit method. The degree of the Chebyshev polynomial must be chosen large enough to obtain a CFL condition independent of the stiff part of the system.
For the two- and three-stage Gau{\ss} RK methods, the preconditioner significantly reduces the runtime at comparable errors.
The savings come from the reduced number of QMR iterations for the preconditioned system \eqref{eq:linear_preconditioner}, while the application of the preconditioner \eqref{def:preconditioner} only requires solves on the fine part and its neighboring coarse elements.
The locally implicit and local time-stepping methods are efficient for admissible \timestepsizes, but are still restricted by a CFL condition on the coarse mesh and become unstable for larger \timestepsizes. Moreover, LSRK(12,4) is only stable for \timestepsizes so small that the spatial error dominates. Though it is faster than the unpreconditioned Gau{\ss} RK methods, it is slower than the preconditioned variants at nearly the same full discretization accuracy.
The Gau{\ss} RK methods are unconditionally stable and therefore remain applicable for all \timestepsizes.

\begin{figure}[!htb]
    \centering
    \input{tikz/timings.tex}
    \caption{\(L^2\) error at final time \(T=1\) versus relative runtime for Gau{\ss} RK with two stages (purple) and three stages (orange), with preconditioner~\eqref{def:preconditioner} (dashed) and without (solid); the locally implicit method (blue); the local time-stepping method (green); and LSRK(12,4) (red marker).}
    \label{fig:timings}
\end{figure}

\section{Conclusion}\label{sec:conclusion}
In this paper, we proposed and analyzed computationally efficient implicit higher-order time integration methods for solving linear \Friedrichs systems on locally refined spatial grids which consist of a small number of fine and a large number of coarse mesh elements.
This is achieved by constructing a preconditioned Krylov subspace method to solve the linear systems arising in each time step of the implicit scheme. 
Our main result shows that the number of Krylov steps to achieve the desired accuracy can be bounded independently of the fine mesh.

Although we focused on linear \Friedrichs systems, our ideas carry over to nonlinear Friedrichs problems like nonlinear \Maxwells equations, where linear systems of the same type appear in each iteration of a (simplified) Newton method. 
Moreover, instead of Gau{\ss} collocation methods other implicit time integration schemes might be employed and the preconditioner can also be combined with rational Krylov subspace methods.

\section*{Acknowledgments} 
The authors thank Pratik Kumbhar for helpful discussions and for providing preliminary numerical examples in Matlab. 

\bibliographystyle{siamplain}
\bibliography{references}

@article {Hochbruck_Lubich_98,
	AUTHOR 		= {Hochbruck, Marlis and Lubich, Christian},
	TITLE 		= {Error analysis of {K}rylov methods in a nutshell},
	JOURNAL 	= {SIAM J. Sci. Comput.},
	FJOURNAL 	= {SIAM Journal on Scientific Computing},
	VOLUME 		= {19},
	YEAR 		= {1998},
	NUMBER 		= {2},
	PAGES 		= {695--701},
	ISSN 		= {1064-8275},
	MRCLASS 	= {65F10},
	MRNUMBER 	= {1618868},
	DOI 		= {10.1137/S1064827595290450},
}

@Article{dealII25,
  author  = {Daniel Arndt and Wolfgang Bangerth and Maximilian Bergbauer and Bruno Blais and Marc Fehling and Rene Gassm\"{o}ller
             and Timo Heister and Luca Heltai and Martin Kronbichler and Matthias Maier and Peter Munch and Sam Scheuerman
             and Bruno Turcksin and Siarhei Uzunbajakau and David Wells and Micha{\l} Wichrowski},
  title   = {The deal.II library, Version 9.7},
  journal = {Journal of Numerical Mathematics},
  year    = 2025,
  volume  = 33,
  number  = 4,
  pages   = {403--415},
  doi     = {10.1515/jnma-2025-0115}
}

@misc{TiMaxdG,
  author = {Constantin Carle and Julian D{\"o}rner and Jonas K{\"o}hler and Jan Leibold and Bernhard Maier},
  title = {{TiMaxdG}},
  url = {https://gitlab.kit.edu/kit/ianm/ag-numerik/projects/dg-maxwell/timaxdg}
}

@article {Hochbruck_Scheifinger_2026,
    AUTHOR = {Hochbruck, Marlis and Scheifinger, Malik},
     TITLE = {Local {T}ime {I}ntegration for {F}riedrichs' {S}ystems},
   JOURNAL = {SIAM J. Numer. Anal.},
  FJOURNAL = {SIAM Journal on Numerical Analysis},
    VOLUME = {64},
      YEAR = {2026},
    NUMBER = {2},
     PAGES = {370--390},
      ISSN = {0036-1429,1095-7170},
   MRCLASS = {65M12 (65M15 65M60)},
  MRNUMBER = {5041821},
       DOI = {10.1137/25M1735627},
}

@article {Busch_Diehl_Niegemann_2021,
    AUTHOR = {Niegemann, Jens and Diehl, Richard and Busch, Kurt},
     TITLE = {Efficient low-storage {R}unge-{K}utta schemes with optimized
              stability regions},
   JOURNAL = {J. Comput. Phys.},
  FJOURNAL = {Journal of Computational Physics},
    VOLUME = {231},
      YEAR = {2012},
    NUMBER = {2},
     PAGES = {364--372},
      ISSN = {0021-9991,1090-2716},
   MRCLASS = {65L06 (65M20)},
  MRNUMBER = {2872080},
       DOI = {10.1016/j.jcp.2011.09.003}
}

@Article{Yee_66,
	Title                    = {Numerical solution of initial boundary value problems involving {M}axwell's equations in isotropic media},
	Author                   = {Kane Yee},
	Journal                  = {IEEE Transactions on Antennas and Propagation},
	Year                     = {1966},
	Pages                    = {302--307},
	Volume                   = {14},
	Number                   = {3},
	URL                      = {https://ieeexplore.ieee.org/document/1138693},
}

@Article{Freund_92,
	AUTHOR		 = {Freund, Roland W.},
	TITLE 		 = {Conjugate gradient-type methods for linear systems with
					complex symmetric coefficient matrices},
	JOURNAL 	 = {SIAM J. Sci. Statist. Comput.},
	FJOURNAL 	 = {Society for Industrial and Applied Mathematics. Journal on
					Scientific and Statistical Computing},
	VOLUME 	   	 = {13},
	YEAR 		 = {1992},
	NUMBER 		 = {1},
	PAGES 		 = {425--448},
	ISSN 		 = {0196-5204},
	MRCLASS 	 = {65F10 (65F20 65F50 65N22)},
	MRNUMBER 	 = {1145195},
	DOI	 	     = {10.1137/0913023},
	URL 		 = {https://doi.org/10.1137/0913023},
}

@Article{Freund_Nachtigal_1991,
  AUTHOR 	= {Freund, Roland W. and Nachtigal, No\"{e}l M.},
  TITLE 	= {Q{MR}: a quasi-minimal residual method for non-{H}ermitian
  				linear systems},
  JOURNAL 	= {Numer. Math.},
  FJOURNAL 	= {Numerische Mathematik},
  VOLUME 	= {60},
  YEAR 		= {1991},
  NUMBER 	= {3},
  PAGES 	= {315--339},
  ISSN 		= {0029-599X},
  MRCLASS 	= {65F10},
  MRNUMBER 	= {1137197},
  DOI 		= {10.1007/BF01385726},
  URL 		= {https://doi.org/10.1007/BF01385726},
}

@Article{HocS16,
  AUTHOR 	= {Hochbruck, Marlis and Sturm, Andreas},
  TITLE 	= {Error analysis of a second-order locally implicit method for
			  linear {M}axwell's equations},
  JOURNAL 	= {SIAM J. Numer. Anal.},
  FJOURNAL 	= {SIAM Journal on Numerical Analysis},
  VOLUME 	= {54},
  YEAR 		= {2016},
  NUMBER 	= {5},
  PAGES 	= {3167--3191},
  ISSN 		= {0036-1429},
  MRCLASS 	= {65M60 (65M12 65M15 78A30)},
  MRNUMBER 	= {3565552},
  MRREVIEWER = {Mario Annunziato},
  DOI 		= {10.1137/15M1038037},
  URL 		= {https://doi.org/10.1137/15M1038037},
}

@Article{HocS19,
	AUTHOR 		= {Hochbruck, Marlis and Sturm, Andreas},
	TITLE 		= {Upwind discontinuous {G}alerkin space discretization and
					locally implicit time integration for linear {M}axwell's equations},
	JOURNAL 	= {Math. Comp.},
	FJOURNAL 	= {Mathematics of Computation},
	VOLUME 		= {88},
	YEAR 		= {2019},
	NUMBER 		= {317},
	PAGES 		= {1121--1153},
	ISSN 		= {0025-5718},
	MRCLASS 	= {65M60 (65M12 65M15 78A25)},
	MRNUMBER 	= {3904141},
	MRREVIEWER 	= {Roberto C. Cabrales},
	DOI 		= {10.1090/mcom/3365},
	URL 		= {https://doi.org/10.1090/mcom/3365},
}

@Article{Eiermann_1989,
	AUTHOR = {Eiermann, Michael},
	TITLE = {On semiiterative methods generated by {F}aber polynomials},
	JOURNAL = {Numer. Math.},
	FJOURNAL = {Numerische Mathematik},
	VOLUME = {56},
	YEAR = {1989},
	NUMBER = {2-3},
	PAGES = {139--156},
	ISSN = {0029-599X},
	MRCLASS = {65F10},
	MRNUMBER = {1018298},
	MRREVIEWER = {Jin Xi Zhao},
	DOI = {10.1007/BF01409782},
	URL = {https://doi.org/10.1007/BF01409782},
}

@Article{Freund_Gutknecht_Nachtigal_1993,
	AUTHOR = {Freund, Roland W. and Gutknecht, Martin H. and Nachtigal, No\"{e}l
	M.},
	TITLE = {An implementation of the look-ahead {L}anczos algorithm for
	non-{H}ermitian matrices},
	JOURNAL = {SIAM J. Sci. Comput.},
	FJOURNAL = {SIAM Journal on Scientific Computing},
	VOLUME = {14},
	YEAR = {1993},
	NUMBER = {1},
	PAGES = {137--158},
	ISSN = {1064-8275},
	MRCLASS = {65F15},
	MRNUMBER = {1201315},
	DOI = {10.1137/0914009},
	URL = {https://doi.org/10.1137/0914009},
}

@article {Hochbruck_Pazur_2015,
	AUTHOR = {Hochbruck, Marlis and Pa\v{z}ur, Tomislav},
	TITLE = {Implicit {R}unge-{K}utta methods and discontinuous {G}alerkin
	discretizations for linear {M}axwell's equations},
	JOURNAL = {SIAM J. Numer. Anal.},
	FJOURNAL = {SIAM Journal on Numerical Analysis},
	VOLUME = {53},
	YEAR = {2015},
	NUMBER = {1},
	PAGES = {485--507},
	ISSN = {0036-1429},
	MRCLASS = {65M60 (35Q61 65J08 65M12 65M15 78M10)},
	MRNUMBER = {3313827},
	MRREVIEWER = {Rados\l aw Szmytkowski},
	DOI = {10.1137/130944114},
	URL = {https://doi.org/10.1137/130944114},
}

@article {DesLM13,
	AUTHOR = {Descombes, St\'{e}phane and Lanteri, St\'{e}phane and Moya, Ludovic},
	TITLE = {Locally implicit time integration strategies in a
	discontinuous {G}alerkin method for {M}axwell's equations},
	JOURNAL = {J. Sci. Comput.},
	FJOURNAL = {Journal of Scientific Computing},
	VOLUME = {56},
	YEAR = {2013},
	NUMBER = {1},
	PAGES = {190--218},
	ISSN = {0885-7474},
	MRCLASS = {65M60 (65M22 78M10)},
	MRNUMBER = {3049948},
	MRREVIEWER = {H. P. Dikshit},
	DOI = {10.1007/s10915-012-9669-5},
	URL = {https://doi.org/10.1007/s10915-012-9669-5},
}

@Article{DesLM17,
	AUTHOR = {Descombes, St\'{e}phane and Lanteri, St\'{e}phane and Moya, Ludovic},
	TITLE = {Temporal convergence analysis of a locally implicit
	discontinuous {G}alerkin time domain method for
	electromagnetic wave propagation in dispersive media},
	JOURNAL = {J. Comput. Appl. Math.},
	FJOURNAL = {Journal of Computational and Applied Mathematics},
	VOLUME = {316},
	YEAR = {2017},
	PAGES = {122--132},
	ISSN = {0377-0427},
	MRCLASS = {65M60 (65M12 78A25 94A08)},
	MRNUMBER = {3588733},
	MRREVIEWER = {Georgios D. Akrivis},
	DOI = {10.1016/j.cam.2016.09.038},
	URL = {https://doi.org/10.1016/j.cam.2016.09.038},
}

@Article{Chabassier_Imperiale_2016,
	AUTHOR = {Chabassier, J. and Imperiale, S.},
	TITLE = {Fourth-order energy-preserving locally implicit time
	discretization for linear wave equations},
	JOURNAL = {Internat. J. Numer. Methods Engrg.},
	FJOURNAL = {International Journal for Numerical Methods in Engineering},
	VOLUME = {106},
	YEAR = {2016},
	NUMBER = {8},
	PAGES = {593--622},
	ISSN = {0029-5981},
	MRCLASS = {65M60 (65M12)},
	MRNUMBER = {3491710},
	MRREVIEWER = {H. P. Dikshit},
	DOI = {10.1002/nme.5130},
	URL = {https://doi.org/10.1002/nme.5130},
}

@Article{Ver11,
	AUTHOR = {Verwer, J. G.},
	TITLE = {Component splitting for semi-discrete {M}axwell equations},
	JOURNAL = {BIT},
	FJOURNAL = {BIT. Numerical Mathematics},
	VOLUME = {51},
	YEAR = {2011},
	NUMBER = {2},
	PAGES = {427--445},
	ISSN = {0006-3835},
	MRCLASS = {65M20 (65L05 65L20 65M06 78M10 78M20)},
	MRNUMBER = {2806538},
	MRREVIEWER = {M. K. Kadalbajoo},
	DOI = {10.1007/s10543-010-0296-y},
	URL = {https://doi.org/10.1007/s10543-010-0296-y},
}

@Article{Pip06,
	AUTHOR = {Piperno, Serge},
	TITLE = {Symplectic local time-stepping in non-dissipative {DGTD}
	methods applied to wave propagation problems},
	JOURNAL = {M2AN Math. Model. Numer. Anal.},
	FJOURNAL = {M2AN. Mathematical Modelling and Numerical Analysis},
	VOLUME = {40},
	YEAR = {2006},
	NUMBER = {5},
	PAGES = {815--841 (2007)},
	ISSN = {0764-583X},
	MRCLASS = {65M70 (37M15 65M12 78M10)},
	MRNUMBER = {2293248},
	DOI = {10.1051/m2an:2006035},
	URL = {https://doi.org/10.1051/m2an:2006035},
}

@Article{GroM10,
	AUTHOR = {Grote, Marcus J. and Mitkova, Teodora},
	TITLE = {Explicit local time-stepping methods for {M}axwell's
	equations},
	JOURNAL = {J. Comput. Appl. Math.},
	FJOURNAL = {Journal of Computational and Applied Mathematics},
	VOLUME = {234},
	YEAR = {2010},
	NUMBER = {12},
	PAGES = {3283--3302},
	ISSN = {0377-0427},
	MRCLASS = {65M60 (78M10)},
	MRNUMBER = {2665386},
	MRREVIEWER = {Vrushali A. Bokil},
	DOI = {10.1016/j.cam.2010.04.028},
	URL = {https://doi.org/10.1016/j.cam.2010.04.028},
}

@Article{GroMM15,
	AUTHOR = {Grote, Marcus J. and Mehlin, Michaela and Mitkova, Teodora},
	TITLE = {Runge-{K}utta-based explicit local time-stepping methods for
	wave propagation},
	JOURNAL = {SIAM J. Sci. Comput.},
	FJOURNAL = {SIAM Journal on Scientific Computing},
	VOLUME = {37},
	YEAR = {2015},
	NUMBER = {2},
	PAGES = {A747--A775},
	ISSN = {1064-8275},
	MRCLASS = {65M60 (65L06 65M50)},
	MRNUMBER = {3324978},
	MRREVIEWER = {Istv\'{a}n Farag\'{o}},
	DOI = {10.1137/140958293},
	URL = {https://doi.org/10.1137/140958293},
}

@Article{DiaG09,
	AUTHOR = {Diaz, Julien and Grote, Marcus J.},
	TITLE = {Energy conserving explicit local time stepping for
	second-order wave equations},
	JOURNAL = {SIAM J. Sci. Comput.},
	FJOURNAL = {SIAM Journal on Scientific Computing},
	VOLUME = {31},
	YEAR = {2009},
	NUMBER = {3},
	PAGES = {1985--2014},
	ISSN = {1064-8275},
	MRCLASS = {65M60 (65M12)},
	MRNUMBER = {2516141},
	MRREVIEWER = {Juhani Pitk\"{a}ranta},
	DOI = {10.1137/070709414},
	URL = {https://doi.org/10.1137/070709414},
}

@Book{HaiLW06_book,
	AUTHOR = {Hairer, Ernst and Lubich, Christian and Wanner, Gerhard},
	TITLE = {Geometric numerical integration},
	SERIES = {Springer Series in Computational Mathematics},
	VOLUME = {31},
	EDITION = {Second},
	NOTE = {Structure-preserving algorithms for ordinary differential
	equations},
	PUBLISHER = {Springer-Verlag, Berlin},
	YEAR = {2006},
	PAGES = {xviii+644},
	ISBN = {3-540-30663-3; 978-3-540-30663-4},
	MRCLASS = {65-02 (37M15 65Lxx 65P10 70-08)},
	MRNUMBER = {2221614},
}

@article{HocK22,
    AUTHOR = {Hochbruck, Marlis and K\"{o}hler, Jonas},
     TITLE = {Error analysis of a fully discrete discontinuous {G}alerkin
              alternating direction implicit discretization of a class of
              linear wave-type problems},
   JOURNAL = {Numer. Math.},
  FJOURNAL = {Numerische Mathematik},
    VOLUME = {150},
      YEAR = {2022},
    NUMBER = {3},
     PAGES = {893--927},
      ISSN = {0029-599X},
   MRCLASS = {65M60 (65M12 65M15 65M22)},
  MRNUMBER = {4394004},
MRREVIEWER = {Murat Uzunca},
       DOI = {10.1007/s00211-021-01262-z}
}

@book{MFO2023,
  title = {Wave {{Phenomena}}: {{Mathematical Analysis}} and {{Numerical Approximation}}},
  shorttitle = {Wave {{Phenomena}}},
  author = {D{\"o}rfler, Willy and Hochbruck, Marlis and K{\"o}hler, Jonas and Rieder, Andreas and Schnaubelt, Roland and Wieners, Christian},
  year = {2023},
  series = {Oberwolfach {{Seminars}}},
  volume = {49},
  publisher = {{Springer International Publishing}},
  address = {{Cham}},
  url = {https://link.springer.com/10.1007/978-3-031-05793-9},
  isbn = {978-3-031-05792-2 978-3-031-05793-9}
}

@Book{Hairer_Wanner_1996,
	AUTHOR = {Hairer, E. and Wanner, G.},
	TITLE = {Solving ordinary differential equations. {II}},
	SERIES = {Springer Series in Computational Mathematics},
	VOLUME = {14},
	EDITION = {Second},
	NOTE = {Stiff and differential-algebraic problems},
	PUBLISHER = {Springer-Verlag, Berlin},
	YEAR = {1996},
	PAGES = {xvi+614},
	ISBN = {3-540-60452-9},
	MRCLASS = {65-02 (34A09 34A45 65-01 65Lxx)},
	MRNUMBER = {1439506},
	DOI = {10.1007/978-3-642-05221-7},
	URL = {https://doi.org/10.1007/978-3-642-05221-7},
}

@Book{DiPE12,
	AUTHOR = {Di Pietro, Daniele Antonio and Ern, Alexandre},
	TITLE = {Mathematical aspects of discontinuous {G}alerkin methods},
	SERIES = {Math\'{e}matiques \& Applications (Berlin) [Mathematics \&
	Applications]},
	VOLUME = {69},
	PUBLISHER = {Springer, Heidelberg},
	YEAR = {2012},
	PAGES = {xviii+384},
	ISBN = {978-3-642-22979-4},
	MRCLASS = {65-02 (35A35 35F15 35J25 35Q35 65M60 65N30)},
	MRNUMBER = {2882148},
	MRREVIEWER = {R\'{e}mi Vaillancourt},
	DOI = {10.1007/978-3-642-22980-0},
	URL = {https://doi.org/10.1007/978-3-642-22980-0},
}

@Book{Saad_2003,
  AUTHOR = {Saad, Yousef},
  TITLE = {Iterative methods for sparse linear systems},
  EDITION = {Second},
  PUBLISHER = {Society for Industrial and Applied Mathematics, Philadelphia,
  PA},
  YEAR = {2003},
  PAGES = {xviii+528},
  ISBN = {0-89871-534-2},
  MRCLASS = {65-01 (65F10 65F50)},
  MRNUMBER = {1990645},
  MRREVIEWER = {Arnold Reusken},
  DOI = {10.1137/1.9780898718003},
  URL = {https://doi.org/10.1137/1.9780898718003},
}

@article {Dri05,
    AUTHOR = {Driscoll, Tobin A.},
     TITLE = {Algorithm 843: improvements to the {S}chwarz-{C}hristoffel
              toolbox for {MATLAB}},
   JOURNAL = {ACM Trans. Math. Software},
  FJOURNAL = {Association for Computing Machinery. Transactions on
              Mathematical Software},
    VOLUME = {31},
      YEAR = {2005},
    NUMBER = {2},
     PAGES = {239--251},
      ISSN = {0098-3500},
   MRCLASS = {30-04 (30C30 65E05 65Y15)},
  MRNUMBER = {2266791},
       DOI = {10.1145/1067967.1067971},
       URL = {https://doi.org/10.1145/1067967.1067971},
}

@InProceedings{HocK20,
	author          =  "Hochbruck, Marlis and K{\"o}hler, Jonas",
	editor          =  "D{\"o}rfler, Willy	and Hochbruck, Marlis	and Hundertmark, Dirk
						and Reichel, Wolfgang and Rieder, Andreas and Schnaubelt, Roland and Sch{\"o}rkhuber, Birgit",
	title           = "Error {A}nalysis of {D}iscontinuous {G}alerkin {D}iscretizations of a {C}lass of {L}inear
						 {W}ave-type {P}roblems",
	booktitle		= "Mathematics of Wave Phenomena",
	year			= "2020",
	publisher		= "Springer International Publishing",
	address			= "Cham",
	pages			= "197--218",
    url				= "https://link.springer.com/chapter/10.1007%2F978-3-030-47174-3_12",
}

@article{Freund_Nachtigal_1994,
    AUTHOR = {Freund, Roland W. and Nachtigal, No\"{e}l M.},
     TITLE = {An implementation of the {QMR} method based on coupled
              two-term recurrences},
      NOTE = {Iterative methods in numerical linear algebra (Copper Mountain
              Resort, CO, 1992)},
   JOURNAL = {SIAM J. Sci. Comput.},
  FJOURNAL = {SIAM Journal on Scientific Computing},
    VOLUME = {15},
      YEAR = {1994},
    NUMBER = {2},
     PAGES = {313--337},
      ISSN = {1064-8275},
   MRCLASS = {65F10},
  MRNUMBER = {1261456},
MRREVIEWER = {David F. Griffiths},
       DOI = {10.1137/0915022},
       URL = {https://doi.org/10.1137/0915022},
}

@incollection {Gutknecht_1997,
    AUTHOR = {Gutknecht, Martin H.},
     TITLE = {Lanczos-type solvers for nonsymmetric linear systems of
              equations},
 BOOKTITLE = {Acta numerica, 1997},
    SERIES = {Acta Numer.},
    VOLUME = {6},
     PAGES = {271--397},
 PUBLISHER = {Cambridge Univ. Press, Cambridge},
      YEAR = {1997},
   MRCLASS = {65F50 (65F10)},
  MRNUMBER = {1489258},
MRREVIEWER = {Marc Van Barel},
       DOI = {10.1017/S0962492900002737},
       URL = {https://doi.org/10.1017/S0962492900002737},
}

@incollection {Freund_Golub_Nachtigal_1992,
    AUTHOR = {Freund, Roland W. and Golub, Gene H. and Nachtigal, No\"{e}l M.},
     TITLE = {Iterative solution of linear systems},
 BOOKTITLE = {Acta numerica, 1992},
    SERIES = {Acta Numer.},
     PAGES = {57--100},
 PUBLISHER = {Cambridge Univ. Press, Cambridge},
      YEAR = {1992},
   MRCLASS = {65F10},
  MRNUMBER = {1165723},
MRREVIEWER = {L. Hageman},
       DOI = {10.1017/s0962492900002245},
       URL = {https://doi.org/10.1017/s0962492900002245},
}

@article {CroP17,
    AUTHOR = {Crouzeix, M. and Palencia, C.},
     TITLE = {The numerical range is a {$(1+\sqrt{2})$}-spectral set},
   JOURNAL = {SIAM J. Matrix Anal. Appl.},
  FJOURNAL = {SIAM Journal on Matrix Analysis and Applications},
    VOLUME = {38},
      YEAR = {2017},
    NUMBER = {2},
     PAGES = {649--655},
      ISSN = {0895-4798},
   MRCLASS = {47A25 (47A30)},
  MRNUMBER = {3666309},
MRREVIEWER = {Sophie Grivaux},
       DOI = {10.1137/17M1116672},
       URL = {https://doi.org/10.1137/17M1116672},
}

@article{HocL97,
	AUTHOR = {Hochbruck, Marlis and Lubich, Christian},
	TITLE = {On {K}rylov subspace approximations to the matrix exponential
	operator},
	JOURNAL = {SIAM J. Numer. Anal.},
	FJOURNAL = {SIAM Journal on Numerical Analysis},
	VOLUME = {34},
	YEAR = {1997},
	NUMBER = {5},
	PAGES = {1911--1925},
	ISSN = {0036-1429},
	MRCLASS = {65F30 (65L99)},
	MRNUMBER = {1472203},
	DOI = {10.1137/S0036142995280572},
	URL = {https://doi.org/10.1137/S0036142995280572},
	}

@Article {Philips_Shadid_Cyr_18,
	AUTHOR = {Phillips, Edward G. and Shadid, John N. and Cyr, Eric C.},
	TITLE = {Scalable preconditioners for structure preserving
	discretizations of {M}axwell equations in first order form},
	JOURNAL = {SIAM J. Sci. Comput.},
	FJOURNAL = {SIAM Journal on Scientific Computing},
	VOLUME = {40},
	YEAR = {2018},
	NUMBER = {3},
	PAGES = {B723--B742},
	ISSN = {1064-8275},
	MRCLASS = {65F08 (35Q61 65M22 65M60 78M25)},
	MRNUMBER = {3796378},
	MRREVIEWER = {S\'{e}bastien J. Boyaval},
	DOI = {10.1137/17M1135827},
	URL = {https://doi.org/10.1137/17M1135827},
}

@Article{Bonazzoli_Dolean_Graham_Spence_19,
	AUTHOR = {Bonazzoli, M. and Dolean, V. and Graham, I. G. and Spence, E.
	A. and Tournier, P.-H.},
	TITLE = {Domain decomposition preconditioning for the high-frequency
	time-harmonic {M}axwell equations with absorption},
	JOURNAL = {Math. Comp.},
	FJOURNAL = {Mathematics of Computation},
	VOLUME = {88},
	YEAR = {2019},
	NUMBER = {320},
	PAGES = {2559--2604},
	ISSN = {0025-5718},
	MRCLASS = {65N30 (35Q61 65F08 65F10 65N55 78A45)},
	MRNUMBER = {3985469},
	DOI = {10.1090/mcom/3447},
	URL = {https://doi.org/10.1090/mcom/3447},
}

@article{Lubomir_10,
	AUTHOR = {Ba\v{n}as, \v{L}ubom\'{\i}r},
	TITLE = {An efficient multigrid preconditioner for {M}axwell's
	equations in micromagnetism},
	JOURNAL = {Math. Comput. Simulation},
	FJOURNAL = {Mathematics and Computers in Simulation},
	VOLUME = {80},
	YEAR = {2010},
	NUMBER = {8},
	PAGES = {1657--1663},
	ISSN = {0378-4754},
	MRCLASS = {65M55 (65M60 78A25 78M10 82D40)},
	MRNUMBER = {2647259},
	DOI = {10.1016/j.matcom.2009.02.009},
	URL = {https://doi.org/10.1016/j.matcom.2009.02.009},
}

@article {Adler_Hu_Zik_17,
	AUTHOR = {Adler, J. H. and Hu, X. and Zikatanov, L. T.},
	TITLE = {Robust solvers for {M}axwell's equations with dissipative
	boundary conditions},
	JOURNAL = {SIAM J. Sci. Comput.},
	FJOURNAL = {SIAM Journal on Scientific Computing},
	VOLUME = {39},
	YEAR = {2017},
	NUMBER = {5},
	PAGES = {S3--S23},
	ISSN = {1064-8275},
	MRCLASS = {65M60 (35Q61 65M22 78M25)},
	MRNUMBER = {3716564},
	MRREVIEWER = {Bal\'{a}zs Kov\'{a}cs},
	DOI = {10.1137/16M1073339},
	URL = {https://doi.org/10.1137/16M1073339},
}

@article {Cyr_Gander_Thomas_07,
	AUTHOR = {St-Cyr, A. and Gander, M. J. and Thomas, S. J.},
	TITLE = {Optimized multiplicative, additive, and restricted additive
	{S}chwarz preconditioning},
	JOURNAL = {SIAM J. Sci. Comput.},
	FJOURNAL = {SIAM Journal on Scientific Computing},
	VOLUME = {29},
	YEAR = {2007},
	NUMBER = {6},
	PAGES = {2402--2425},
	ISSN = {1064-8275},
	MRCLASS = {65F10},
	MRNUMBER = {2357620},
	MRREVIEWER = {Vladimir B. Larin},
	DOI = {10.1137/060652610},
	URL = {https://doi.org/10.1137/060652610},
}

@article {Aruliah_Ascher_02,
	AUTHOR = {Aruliah, D. A. and Ascher, U. M.},
	TITLE = {Multigrid preconditioning for {K}rylov methods for
	time-harmonic {M}axwell's equations in three dimensions},
	JOURNAL = {SIAM J. Sci. Comput.},
	FJOURNAL = {SIAM Journal on Scientific Computing},
	VOLUME = {24},
	YEAR = {2002},
	NUMBER = {2},
	PAGES = {702--718},
	ISSN = {1064-8275},
	MRCLASS = {65N22 (65F10 65N55 78M99)},
	MRNUMBER = {1951063},
	MRREVIEWER = {Alessandro Veneziani},
	DOI = {10.1137/S1064827501387358},
	URL = {https://doi.org/10.1137/S1064827501387358},
}

@article {Hiptmair_99,
	AUTHOR = {Hiptmair, R.},
	TITLE = {Multigrid method for {M}axwell's equations},
	JOURNAL = {SIAM J. Numer. Anal.},
	FJOURNAL = {SIAM Journal on Numerical Analysis},
	VOLUME = {36},
	YEAR = {1999},
	NUMBER = {1},
	PAGES = {204--225},
	ISSN = {0036-1429},
	MRCLASS = {65N55 (65N30 78-08)},
	MRNUMBER = {1654571},
	MRREVIEWER = {Jacques Rappaz},
	DOI = {10.1137/S0036142997326203},
	URL = {https://doi.org/10.1137/S0036142997326203},
}
\end{document}